\tikzset{>={Latex[width=3mm,length=3mm]}}
\newtheorem{theorem}{Theorem}
\newtheorem{lemma}[theorem]{Lemma}
\newtheorem{question}[theorem]{Question}
\newtheorem{corollary}[theorem]{Corollary}
\numberwithin{theorem}{section}
\numberwithin{lemma}{section}
\numberwithin{conjecture}{section}
\numberwithin{prop}{section}
\numberwithin{corollary}{section}
\numberwithin{question}{section}
\numberwithin{equation}{section}
\numberwithin{figure}{section}
\theoremstyle{definition}
\newtheorem{definition}[theorem]{Definition}
\newtheorem{example}[theorem]{Example}
\newtheorem{remark}[theorem]{Remark}
\newcommand{\SYT}{\mathrm{SYT}}
\title{Iterating the RSK Bijection}
\author{Maria Gillespie, Jacob Hocevar, Ananya Kulshrestha, Kosha Upadhyay}
\begin{document} 
\maketitle

\begin{abstract}

We investigate the dynamics of the well-known \textit{RSK bijection} on permutations when iterated on various reading words of the recording tableau.  In the setting of the ordinary (row) reading word, we show that there is exactly one fixed point per partition shape, and that it is always reached within two steps from any starting permutation.   

We also consider the modified dynamical systems formed by iterating RSK on the \textit{column reading word} and the \textit{reversed reading word} of the recording tableau.  We show that the column reading word gives similar dynamics to the row reading word.  On the other hand, for the reversed reading word, we always reach either a $2$-cycle or fixed point after two steps.  In fact, we reach a fixed point if and only if the shape of the initial tableau is self-conjugate.  
\end{abstract}

\section{Introduction}

A \textbf{permutation} of $1,2,\ldots,n$ is a rearrangement of the numbers $1,2,3,\ldots,n$ in some order.  A \textbf{standard Young tableau}, or SYT, is a way of writing the numbers $1,2,\ldots,n$ in the unit squares in the first quadrant such that the rows are left-aligned and increasing from left to right, and the columns are bottom-aligned and increasing from bottom to top.  

Robinson \cite{Robinson} famously discovered a bijection between permutations and pairs of standard Young tableaux of the same shape. (See Figure \ref{fig:example} for an example.)   Schensted and Knuth \cite{Schensted,Knuth} later generalized this bijection to a correspondence between words or two-line arrays and pairs involving \textit{semistandard} Young tableau, in which numbers may repeat.   Together, their discovery is often referred to as the Robinson-Schensted-Knuth bijection, or simply \textbf{RSK}. 

\begin{figure}
    \centering
    $5,1,4,6,2,3,7\hspace{0.5cm}\xrightarrow[]{RSK}\hspace{0.5cm}\left(\,\young(5,46,1237)\hspace{0.3cm},\hspace{0.3cm} \young(5,26,1347)\,\right)$
    \caption{A permutation and the pair of standard Young tableaux that it corresponds to under the RSK bijection.}
    \label{fig:example}
\end{figure}

The RSK bijection has great importance in many areas of mathematics, including representation theory, geometry, and combinatorics.  As one famous application, it gives a combinatorial proof of the identity $$\sum_{\lambda\vdash n} (f^\lambda)^2=n!$$ where $f^\lambda$ is the number of standard Young tableaux of shape $\lambda$. 

In the representation theory of the symmetric group $S_n$, this identity reflects the fact that the irreducible Specht module $V_\lambda$ has dimension $f^\lambda$, and the regular representation (of dimension $n!$) contains $f^\lambda$ copies of $V_\lambda$ for each shape $\lambda$ (see \cite{Sagan} for a thorough introduction to these topics).  Young tableaux and RSK-like insertion algorithms also have direct applications to the geometry of Grassmannians \cite[Chapter 10]{Fulton}, symmetric function theory \cite[Chapter 7]{Stanley}, the representation theory of Lie groups and algebras \cite[Chapter 8]{Fulton}, and crystal base theory \cite{BumpSchilling}.

The RSK algorithm on permutations takes a permutation $\pi$ and returns a pair $(S,T)$ of standard Young tableaux.  The tableau $S$ is called the \textbf{insertion tableau} and $T$ is called the \textbf{recording tableau}.  It is well-known that if one takes the \textbf{reading word} $w$ of $S$, formed by reading the rows in order from top to bottom, then applying the RSK algorithm again to $w$ will result in the same insertion tableau $S$ (and possibly a different recording tableau).  This leads naturally to the following question.

\begin{question}
 What happens when one applies the RSK algorithm again to the reading word of the recording tableau $T$?
\end{question}

In this paper, we investigate the above question by analyzing the dynamical system $f:S_n\to S_n$ on the set of permutations defined by applying RSK to a permutation and returning the reading word of the recording tableau. It is natural to ask what its fixed points, cycles, and orbits look like.

Our main results are as follows.  First, under iterating the dynamical system $f$, the shape of the RSK tableaux remains constant, and the orbits all hit fixed points within two steps.  We also prove that there exists exactly one fixed point per shape of SYT, and so the number of fixed points is the number $p(n)$ of partitions of $n$.  (See Section \ref{sect:fixed-points} for the full statements and proofs of these results.)

There are many natural variants of this iterative procedure that may be defined by alternative reading words.  One such variant is given by the \textbf{column reading word} formed by reading down the columns from left to right, which is known to have the same insertion tableau as the row reading word.  We show in Section \ref{sec:column} that iterating this process $c:S_n\to S_n$, given by taking RSK of a permutation and then taking the column reading word of the recording tableau, also results in a fixed point in at most two steps, the shape is invariant, and there is one fixed point per shape.

In Section \ref{sec:reverse}, we also consider the process of iterating RSK on the \textit{reverse} reading word of the recording tableau, which has more interesting dynamics.  We show that in this case, the orbits enter a $1$- or $2$-cycle in at most two steps, and we classify these $2$-cycles and fixed points completely.

\section{Background and notation}\label{sec:background}
 We begin by recalling some basic combinatorial definitions and notations.
 \subsection{Permutations}
 
 A \textbf{permutation} of size $n$ is an ordering of the set $\{1, 2, 3, \ldots, n \}$.  We denote a permutation with commas separating the numbers, as in $$2,  4,  7,  3,  5,  1,  6,  8.$$
 For example, the permutations of size $3$ are 
 \begin{center}
 \begin{tabular}{ccc}
 1,2,3 & 2,1,3 & 3,1,2 \\
 1,3,2 & 2,3,1 & 3,2,1
 \end{tabular}
 \end{center} 
for a total of $3! = 6$ permutations.  In general, there are $n!$ 
permutations of size $n$.

\begin{definition}\label{def:inverse}
The \textbf{inverse} of a permutation $\pi$ is the permutation $\sigma$ of $1,2,\ldots,n$ for which $\sigma_i=j$ if and only if $\pi_j=i$.  We write $\pi^{-1}$ to denote the inverse of $\pi$.
\end{definition}

For example, if $\pi$ is the permutation $2,3,1$, then the first element of the inverse would be $3$, because $1$ is in the \textit{third} position. The next element is $1$, because $2$ is in the \textit{first} position. The final element is $2$, because $3$ is found in the \textit{second} position. Therefore, the inverse permutation of $2,3,1$ is $3,1,2$.

\begin{remark}
 If we think of the permutation $\pi_1,\ldots,\pi_n$ as the bijection from $\{1,2,\ldots,n\}$ to itself that sends $i$ to $\pi_i$ for all $i$, then the inverse permutation corresponds to the inverse function.
\end{remark}

\begin{definition}\label{def:involution}
An \textbf{involution} is a permutation whose inverse is itself.
\end{definition}

\begin{example}
The inverse of the permutation $2,1,3$ is $2,1,3$ (itself), so $2,1,3$ is an involution.
\end{example}

\subsection{Partitions and Young tableaux}

A \textbf{partition} of a positive integer $n$ is a sequence $\lambda=(\lambda_1,\lambda_2,\ldots,\lambda_k)$ of positive integers satisfying $\lambda_1\ge \lambda_2\ge \cdots \ge \lambda_k$ and $\sum_i \lambda_i=n$.  If $\lambda$ is a partition of $n$ we write $\lambda\vdash n$.

We draw a partition $\lambda$ using a \textbf{Young diagram} in French notation by stacking $\lambda_i$ left-justified boxes on the $i$-th row from the bottom of the diagram for each $i$.

\begin{definition}
A \textbf{standard Young tableau}, or \textbf{SYT}, is a filling of the boxes of a Young diagram of size $n$ with the numbers $1,2,\ldots,n$, such that the numbers in each row are increasing from left to right, and the numbers in each column are increasing from bottom to top.
\end{definition}

\begin{example}\label{ex:tableau}
The diagram below is an example of a standard Young tableau of size $8$, filling the partition shape $\lambda=(4,2,2)$.
$$\young(46,25,1378)$$
\end{example}

\begin{definition}
The \textbf{reading word} of an SYT is the permutation formed by concatenating the rows from top to bottom. In other words, the reading word is obtained by reading the numbers from left to right, then top to bottom, as you would words on a page.
\end{definition}

For example, the reading word of the tableau in Example \ref{ex:tableau} is $4,6,2,5,1,3,7,8$.

\begin{remark}
Not every permutation is a reading word. For example, any permutation that starts with $1$ (excluding $1,2,3,\ldots,n$) cannot be the reading word of any tableau because the $1$ is anchored to the bottom row.
\end{remark}

\begin{definition} The \textbf{transpose} of a Young diagram or tableau is the result of reflecting the shape or tableau about the diagonal starting from the bottom-left corner. For example, the transpose of the tableau at left is the tableau at right:
$$\young(4,2,13) \hspace{2cm} \young(3,124)$$
\end{definition}

\subsection{The RSK bijection on permutations}

The RSK bijection assigns to each permutation $\pi$ of size $n$ a unique pair $\mathrm{RSK}(\pi)=(S,T)$ of standard Young tableaux, known as the \textbf{insertion tableau} and \textbf{recording tableau}. 


\subsubsection{Insertion tableau}

To define the insertion tableau corresponding to a permutation $\pi=\pi_1,\pi_2,\ldots,\pi_n$, we read the permutation from left to right and insert the $i$-th term $\pi_i$ into the tableau  on the $i$-th step (starting with the empty tableau on step $0$). 

\textbf{Case 1:} When we are inserting $\pi_i$ into the SYT, if it is bigger than any box in the bottom row, then we place it at the end of that row. For example, the permutation $1, 2, 3 $ gives us the sequence of tableaux $$\emptyset \to \raisebox{-0.08cm}{\young(1)}\to \raisebox{-0.08cm}{\young(12)}\to \raisebox{-0.08cm}{\young(123)}$$
since each number is bigger than all the previous numbers.  The final tableau, $\young(123)$, is the insertion tableau of the permutation $1,2,3$.

\textbf{Case 2:} If instead $a=\pi_i$ is not larger than every entry in the first row, then a different process applies.  We define $b$ to be the leftmost box in the first row that is greater than $a$.  This box $b$ will get replaced by $a$.  Now, we insert box $b$ into the second row following the same rules as above; putting it at the end of the second row if larger, or replacing some box $c$ again. This process continues as necessary until all boxes are placed in some row.

We then repeat the algorithms given by the two cases until all the numbers $\pi_1,\ldots,\pi_n$ have been inserted, and the result is the insertion tableau.

For example, given the permutation $2,3,1 $, we first place the $2$ and $3$ giving
$$\young(23)$$ 

Then, we must replace the $2$ with a $1$ since $2$ is the first number greater than $1$ from left-to-right on the bottom row.
The $2$ is then placed at the end of the second row to obtain the insertion tableau $$\young(2,13)$$

\subsubsection{Recording tableau}

The recording tableau of the RSK bijection is not generated from the permutation directly, but rather from the steps taken when producing the insertion tableau. For its construction, we look at the shape of the insertion tableau and record the number of the step in which each box was added.  More precisely:

 \begin{definition}
 The \textbf{recording tableau} for a permutation $\pi$ is defined as follows.  When inserting $\pi_i$ using the insertion algorithm above, the shape increases by exactly one box; this box is labeled by $i$ in the recording tableau.
 \end{definition}
   
 Note that the definition above guarantees that the insertion and recording tableaux have the same shape. For example, consider the permutation $2, 4, 7, 3, 5, 1, 6, 8$. The RSK algorithm applied to this permutation is illustrated in the table in Figure \ref{bigtable}.

\begin{figure}[h]
\begin{center}
\renewcommand{\arraystretch}{2}
\begin{tabular}{c|l|l}
    Step Number & Insertion Tableau & Recording Tableau \\ \hline
    1 & \young(2) & \young(1) \\
    2 & \young(24) & \young(12) \\
    3 & \young(247) & \young(123) \\
    4 & \young(4,237) & \young(4,123) \\
    5 & \young(47,235) & \young(45,123) \\
    6 & \young(4,27,135) & \young(6,45,123) \\
    7 & \young(4,27,1356) & \young(6,45,1237) \\
    8 & \young(4,27,13568) & \young(6,45,12378) 
\end{tabular}
\end{center}

\caption{\label{bigtable} The steps of the RSK algorithm on the permutation $2,4,7,3,5,1,6,8$.}
\end{figure}

Putting these definitions together, let $S_n$ be the set of all permutations of $1,2,\ldots,n$, and let $\SYT(\lambda)$ be the set of all standard Young tableaux of shape $\lambda$.  Then the RSK bijection is the map $$\mathrm{RSK}:S_n\to \bigcup_{\lambda\vdash n} \SYT(\lambda)\times \SYT(\lambda)$$ such that $\mathrm{RSK}(\pi)=(S,T)$ where $S$ is the insertion tableau and $T$ is the recording tableau of the permutation $\pi$. Thus, for example, we have that $\mathrm{RSK}(2,4,7,3,5,1,6,8)$ is the pair of tableaux
$$\left(\,\young(4,27,13568)\hspace{0.3cm},\hspace{0.3cm}\young(6,45,12378)\,\right).$$

It was shown by Robinson \cite{Robinson} that the map $\mathrm{RSK}$ above is a bijection.


\subsection{RSK and reading words}

In the example above, the insertion tableau (left) has reading word  $4,2,7,1,3,5,6,8$. The recording tableau (right) has a reading word of $6,4,5,1,2,3,7,8$.  

As mentioned in the introduction, applying RSK to the reading word of the insertion tableau yields the same insertion tableau.  Indeed, applying RSK to the permutation $4,2,7,1,3,5,6,8$ yields the pair of tableaux
$$\left(\,\young(4,27,13568) \hspace{0.3cm},\hspace{0.3cm}\young(4,25,13678)\,\right),$$
which has the same insertion tableau as above (but a different recording tableau).  We state this fact precisely as follows.

\begin{theorem}[{\cite[Theorems A.1.1.4, A.1.1.6]{Stanley}}]\label{insertion-same} 
Let $S$ be a standard Young tableau with reading word $s$.  Then applying the RSK algorithm to $s$ will generate the same insertion tableau $S$. 
\end{theorem}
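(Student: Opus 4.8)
The plan is to prove Theorem~\ref{insertion-same} by induction on the number of rows of $S$, peeling off the bottom row and using the structure of RSK insertion. First I would recall the key mechanism: when we insert a sequence of entries into an initially empty tableau via the row-insertion (bumping) algorithm, the bottom row at any stage consists exactly of the smallest values seen so far in a certain ``left-to-right minimum'' sense, and every entry that gets bumped out of the bottom row is passed up to the second row. So the plan is to analyze what happens to the bottom row when we insert the reading word $s = r_k r_{k-1} \cdots r_1$, where $r_j$ denotes the word obtained by reading row $j$ of $S$ from left to right (so $r_1$ is the bottom row, and the reading word concatenates the rows from the top $r_k$ down to the bottom $r_1$).

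The first main step is to establish the base case and the bottom-row behavior. Since $r_1$ is the bottom row of an SYT, it is an increasing sequence, and its entries are precisely the set of entries of $S$ that are left-to-right minima when $S$ is read in reading-word order — this is a standard characterization I would cite or quickly verify from the column-strictness of $S$. The crucial claim is: when inserting $s$, after processing the prefix $r_k \cdots r_2$ (all rows above the bottom), the resulting insertion tableau has, as the entries bumped down into what will become its bottom row, exactly the entries of $r_1$ in increasing order, and moreover the tableau built from $r_k \cdots r_2$ alone (ignoring the final insertion of $r_1$) is precisely $S$ with its bottom row deleted. The second main step is the inductive hook: the word $r_k \cdots r_2$ is itself the reading word of the tableau $S'$ obtained by deleting the bottom row of $S$ and shifting down — except $S'$ is an SYT on a \emph{different} alphabet (its entries are a subset of $\{1,\dots,n\}$), so I would either state the theorem for SYT on arbitrary totally ordered alphabets from the outset, or observe that RSK insertion is invariant under order-preserving relabeling of the alphabet, so the induction goes through. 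By induction, inserting $r_k \cdots r_2$ reproduces $S'$, i.e.\ $S$ with its bottom row removed; then inserting the increasing row $r_1$ on top of (i.e.\ into the bottom of) this tableau must, entry by entry, bump the smaller entries of $S'$'s rows upward and reconstruct exactly the bottom row of $S$ underneath, recovering $S$.

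The step I expect to be the main obstacle is verifying carefully that inserting the bottom row $r_1$ (an increasing sequence of the left-to-right minima) into the tableau $S \setminus (\text{bottom row})$ reconstructs $S$ exactly, rather than some other tableau of the same shape. This requires a precise understanding of the bumping path of each $r_1$-entry: one needs that the $i$-th smallest bottom-row entry $a$, when inserted, bumps out precisely the entry of $S$ that currently sits where $a$ belongs in $S$'s bottom row, sends it up one row to bump the correct second-row entry of $S$, and so on, so that after all of $r_1$ is inserted every box of $S$ has been restored to its original value. Making this bijection between ``boxes of $S$'' and ``steps of the bumping paths'' rigorous — essentially showing the bumping is the inverse of the reverse-bumping / ``de-insertion'' that would strip $S$ down to $S'$ one column of the bottom row at a time — is the technical heart. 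Since the excerpt attributes this to Stanley, I would structure the argument to match the standard proof there (via the row-insertion/jeu-de-taquin compatibility or via Greene's invariants on the longest increasing/decreasing subsequences of $s$), and I would lean on the Greene-invariant viewpoint as a clean alternative: the reading word $s$ of $S$ has the same maximal-union-of-$j$-increasing-subsequences lengths as any word inserting to $S$, which pins down the shape, combined with the insertion-tableau uniqueness to pin down $S$ itself.
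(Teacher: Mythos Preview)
The paper does not actually prove this theorem; it is cited from Stanley without proof. However, the essential argument appears inside the paper's proof of Lemma~\ref{lem:reading-recording}, which proceeds \emph{top-down}: first the top row $r_k$ is inserted (forming a single row), then inserting row $r_{k-1}$ bumps every previously inserted entry up one row (by column-strictness of $S$) and appends the remaining entries of $r_{k-1}$ on the right, and so on. After inserting rows $r_k,\ldots,r_j$ one has exactly rows $j,\ldots,k$ of $S$ sitting at the bottom, and the induction is immediate. This is the standard argument and is structurally simpler than yours because the inductive step (``insert the next row underneath'') is symmetric at every stage.

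Your bottom-up induction (peel off the bottom row, apply the hypothesis to $S'$, then insert $r_1$) is also correct in outline, and your identification of the ``main obstacle'' --- showing that inserting the increasing word $r_1$ into $S'$ recovers $S$ --- is accurate; the column-by-column bumping analysis does go through. Two corrections, though. First, your side claim that the entries of $r_1$ are ``precisely the left-to-right minima'' of the reading word is false: for $S$ with bottom row $(1,3)$ and top row $(2)$, the reading word $2,1,3$ has left-to-right minima $\{2,1\}$, not $\{1,3\}$ (it is the first \emph{column}, not the first row, that records left-to-right minima). Fortunately your argument never actually uses this. Second, your Greene-invariant alternative is incomplete as stated: Greene's theorem (Theorem~\ref{thm:shape} here) determines only the \emph{shape} of the insertion tableau, and there is no free-standing ``insertion-tableau uniqueness'' principle that upgrades this to the full tableau without passing through Knuth equivalence --- at which point you are doing more work than the direct row-by-row insertion. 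Stick with the direct argument, and consider reorganizing it top-down to match the paper.
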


In particular, repeating the process of applying RSK to the reading word of the insertion tableau will therefore reach a fixed point in just one step.  However, the same is not true of the recording tableau, as we will see in Section \ref{sect:fixed-points}.

Finally, we recall how RSK interacts with inverse permutations.  

\begin{theorem}[{\cite[Theorem 7.13.1]{Stanley}}]\label{inverse}
If $RSK(\pi) = (S, T)$, where $S$ and $T$ are the insertion and recording tableaux respectively, then $RSK(\pi^{-1}) = (T, S)$.
\end{theorem}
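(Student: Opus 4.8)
The plan is to prove this with a \emph{symmetric} combinatorial model of RSK --- I would use Fomin's growth diagrams (Viennot's geometric ``shadow line'' construction works equally well) --- in which interchanging the insertion and recording tableaux is realized by a transformation that is manifestly the same as passing from $\pi$ to $\pi^{-1}$.

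First I would set up the growth diagram. Draw the $n \times n$ permutation matrix of $\pi$ as a grid of unit cells, with a dot in the cell in row $i$ and column $\pi_i$. Label every lattice point on the bottom and left boundary with the empty partition $\varnothing$, then fill in a partition at every remaining lattice point by repeatedly applying Fomin's local growth rule to each unit cell: given the partitions at the SW, NW, and SE corners and whether the cell is dotted, the rule produces the partition at the NE corner, in such a way that consecutive partitions along any row or column of the grid differ by at most one box. The sequence of partitions read along the top edge is then a saturated chain $\varnothing = \lambda^{(0)} \subset \lambda^{(1)} \subset \cdots \subset \lambda^{(n)}$ encoding a standard Young tableau, and similarly along the right edge; the standard fact to invoke is that these two tableaux are precisely $S$ and $T$.

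Granting that, the theorem is immediate. Reflecting the grid across its main diagonal (the one through the SW corner) sends a dot in row $i$, column $\pi_i$ to a dot in row $\pi_i$, column $i$, and as $i$ ranges over $\{1, \ldots, n\}$ this is exactly the permutation matrix of $\pi^{-1}$. Fomin's local rule is symmetric under this reflection --- it simply swaps the NW and SE corners of each cell --- so the entire filled diagram for $\pi$ is carried to the filled diagram for $\pi^{-1}$, with the top and right edges exchanged. Hence $\mathrm{RSK}(\pi^{-1}) = (T, S)$.

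The real work, and the main obstacle, is the equivalence lemma: that the growth-diagram tableaux coincide with the Schensted insertion and recording tableaux. I would prove this by induction on $n$, removing the dot in the last column: deleting it removes the final step of the recording tableau, and one checks that the last column of local growth rules reproduces exactly one row-insertion step of the Schensted algorithm, so that the restriction to the first $n-1$ columns is the growth diagram of the shorter permutation. Conventions need care here --- French versus English notation, which boundary edges are read off, and the dotted versus undotted form of the local rule. The alternative route through Viennot's shadow diagrams faces the same obstacle, namely verifying that the shadow construction computes Schensted insertion; there, reflecting the point set across the line $y = x$ again produces $\pi^{-1}$ and swaps the coordinate axes used to read the successive rows of $S$ and $T$. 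Once a symmetric model is in place the symmetry drops out with no further computation, which is why the statement is usually, as here, just cited from \cite{Stanley} or \cite{Fulton}.
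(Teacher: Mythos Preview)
The paper does not prove this theorem; it is stated as a citation to \cite[Theorem 7.13.1]{Stanley} and used as a black box (to deduce Corollary~\ref{lem:involution}). So there is no ``paper's own proof'' to compare against.

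That said, your sketch is correct and is essentially the argument Stanley gives in the cited reference: he uses Viennot's geometric shadow construction, which is equivalent to the Fomin growth-diagram model you describe, and the symmetry $\mathrm{RSK}(\pi^{-1}) = (T,S)$ falls out from reflecting the permutation matrix across the main diagonal. You have correctly identified both the mechanism (the local rule is invariant under swapping NW and SE, so the whole diagram reflects) and the genuine technical cost (verifying that the growth-diagram output agrees with Schensted row insertion, which requires an inductive comparison column by column). Your remark about conventions is apt: in French notation with dots placed at $(\pi_i, i)$ one reads $S$ along the top and $T$ along the right, and getting this bookkeeping consistent is where most written accounts spend their effort. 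Nothing is missing from your outline; filling in the induction for the equivalence lemma would make it a complete proof.
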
 

The following corollary is a known fact, but we state it here and provide a brief proof for the reader's convenience.

\begin{corollary}\label{lem:involution}
A permutation is an involution if and only if its insertion and recording tableaux are identical.
\end{corollary}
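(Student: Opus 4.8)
The plan is to derive this directly from Theorem~\ref{inverse}, the statement that $\mathrm{RSK}(\pi^{-1}) = (T,S)$ whenever $\mathrm{RSK}(\pi) = (S,T)$, together with the fact that RSK is a bijection.

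For the forward direction, suppose $\pi$ is an involution, so $\pi^{-1} = \pi$. Writing $\mathrm{RSK}(\pi) = (S,T)$, Theorem~\ref{inverse} gives $\mathrm{RSK}(\pi^{-1}) = (T,S)$. But $\pi^{-1} = \pi$, so $\mathrm{RSK}(\pi) = (T,S)$ as well. Since RSK is a well-defined function (a bijection, in particular single-valued), the pairs $(S,T)$ and $(T,S)$ must be equal, which forces $S = T$. Hence the insertion and recording tableaux coincide.

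For the reverse direction, suppose $\mathrm{RSK}(\pi) = (S,T)$ with $S = T$. Then by Theorem~\ref{inverse}, $\mathrm{RSK}(\pi^{-1}) = (T,S) = (S,T) = \mathrm{RSK}(\pi)$. Since RSK is a bijection, it is injective, so $\pi^{-1} = \pi$, i.e.\ $\pi$ is an involution. This completes both directions.

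I do not anticipate a genuine obstacle here; the corollary is essentially a one-line consequence of Theorem~\ref{inverse} plus injectivity of RSK. The only point requiring the slightest care is making explicit that being a \emph{function} (or bijection) is what lets us conclude $(S,T) = (T,S)$ in the forward direction and that \emph{injectivity} is what lets us conclude $\pi^{-1} = \pi$ in the reverse direction — but both are immediate from Robinson's theorem that RSK is a bijection, already cited in the excerpt. I would simply write the two short implications as above.
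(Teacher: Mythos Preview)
Your proof is correct and, for the forward direction, essentially identical to the paper's: both apply Theorem~\ref{inverse} to conclude $(S,T)=(T,S)$ and hence $S=T$. Your argument is in fact more complete than the paper's, which states the corollary as an ``if and only if'' but only writes out the forward implication; you supply the converse via injectivity of RSK, which the paper omits.
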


\begin{proof}
  Let $\pi$ be an involution. Applying the RSK algorithm to $\pi$ will produce an SYT pair $(S,T)$. Then $\mathrm{RSK}(\pi^{-1})=(T,S)$ by Theorem \ref{inverse}.
  Because $\pi$ is an involution, by Definition \ref{def:involution}, $\pi^{-1} = \pi$. Therefore, $\mathrm{RSK}(\pi^{-1})=\mathrm{RSK}(\pi)$, so $(T,S)=(S,T)$ as pairs. Therefore, $S=T$.
\end{proof}

\subsection{The shape of RSK}

It turns out that the partition shape $\lambda$ of the insertion tableau of a permutation is entirely determined by certain statistics involving subsequences.

\begin{definition}
A \textbf{subsequence} of a sequence $w$ is a sequence obtained by deleting some or no elements of $w$ without changing the order of the remaining elements.
\end{definition}

\begin{definition}\label{def-increase-decrease}
Let $w$ be a permutation. For a positive integer $k$, we say that a subsequence of $w$ is \textbf{$k$-decreasing} if it can be decomposed into $k$ disjoint decreasing subsequences -- equivalently, if it does not contain an increasing subsequence of length $k + 1$. For example, $$(7, 8, 6, 2, 3, 1, 5, 4)$$ is 3-decreasing since it can be decomposed into subsequences $(7, 2, 1)$, $(8, 6, 3)$, and $(5,4)$. We denote the maximum length of a $k$-decreasing subsequence of $w$ by $D_k(w)$. In this example, $D_2(w) = 7$ since then we can take the subsequence $(7, 8, 6, 2, 1, 5, 4)$, which can be decomposed into $2$ subsequences $$(7, 6, 5, 4) \hspace{1cm} (8, 2, 1)$$ 
Similarly, we say that a subsequence of $w$ is \textbf{$k$-increasing} if it can be decomposed into $k$ disjoint increasing subsequences -- equivalently, if it does not contain an decreasing subsequence of length $k + 1$. We denote the maximum length of a $k$-decreasing subsequence of $w$ by $I_k(w)$.
\end{definition}

\begin{theorem}[{\cite[Theorem 7.23.13]{Stanley}}]\label{thm:shape}
Suppose the permutation $w$ maps under RSK to a pair of tableaux having partition shape $\lambda$.  Then for any $j$, we have that $$\lambda_1+\cdots+\lambda_j=I_j(w).$$  Similarly, if $\lambda'$ is the transpose partition, then $$\lambda_1'+\cdots+\lambda_j'=D_j(w).$$
\end{theorem}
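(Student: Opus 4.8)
The plan is to prove the first (increasing) identity $\lambda_1+\cdots+\lambda_j=I_j(w)$ for all $j\ge 1$, and then to deduce the second (decreasing) identity by repeating the argument with the roles of rows and columns --- equivalently, of increasing and decreasing subsequences --- exchanged throughout; alternatively it follows by applying the first identity to the reversed word $w_n,\ldots,w_1$, whose insertion tableau has the conjugate shape $\lambda'$ and whose increasing subsequences correspond bijectively, preserving size and disjointness, to the decreasing subsequences of $w$. For the increasing identity I would assemble three ingredients: \textbf{(a)} each statistic $I_j(\cdot)$ is constant on Knuth-equivalence classes of permutations; \textbf{(b)} every permutation is Knuth-equivalent to the reading word of its own insertion tableau; and \textbf{(c)} for any standard Young tableau $S$ of shape $\lambda$ with reading word $s$, one has $I_j(s)=\lambda_1+\cdots+\lambda_j$.

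The cleanest ingredient is \textbf{(c)}. Each row of $S$ occurs in $s$ as a contiguous, strictly increasing block, so the first $j$ rows of $S$ provide $j$ disjoint increasing subsequences of $s$ with total length $\lambda_1+\cdots+\lambda_j$, giving $I_j(s)\ge \lambda_1+\cdots+\lambda_j$. For the reverse inequality, note that the entries of any single column of $S$, read in reading-word order (that is, from the top row downward), are strictly decreasing; hence if a subsequence $U$ of $s$ is a union of $j$ disjoint increasing subsequences, then $U$ can contain at most $j$ cells from any column of $S$, since more than $j$ cells of one column would constitute a decreasing subsequence of length $j+1$ inside $U$, which is impossible because a decreasing subsequence meets each of the $j$ increasing pieces at most once. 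Summing the bound $\min(j,\lambda'_c)$ over all columns $c$ counts exactly the cells of $\lambda$ lying in rows $1$ through $j$, namely $\lambda_1+\cdots+\lambda_j$; hence $I_j(s)=\lambda_1+\cdots+\lambda_j$. (The identical computation with rows and columns exchanged gives $D_j(s)=\lambda'_1+\cdots+\lambda'_j$.)

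Ingredient \textbf{(b)} is Knuth's theorem, the standard strengthening of Theorem~\ref{insertion-same}: two permutations have equal insertion tableaux if and only if one is obtained from the other by a sequence of elementary Knuth transformations, each rearranging only three consecutive entries (see \cite[Appendix A.1]{Stanley}); in particular $w$ is Knuth-equivalent to the reading word $s$ of its insertion tableau $S$. Granting \textbf{(a)}, we then obtain $I_j(w)=I_j(s)=\lambda_1+\cdots+\lambda_j$, where $\lambda$ is the shape of $S$ (equivalently of the RSK pair of $w$), and symmetrically $D_j(w)=\lambda'_1+\cdots+\lambda'_j$, completing the proof.

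The remaining ingredient \textbf{(a)} is where I expect the genuine work to lie. Given an elementary Knuth transformation taking $w$ to $w'$, together with an optimal family of $j$ disjoint increasing subsequences of $w$ whose union has size $I_j(w)$, one must exhibit such a family for $w'$ of the same total size (and likewise with ``decreasing'' in place of ``increasing''). Since only the three consecutive positions involved in the transformation are affected, the subsequences passing through those positions can be re-routed by a small local adjustment, and the content reduces to a finite case analysis over how the chosen subsequences meet those positions. This monotonicity-style bookkeeping is the one genuinely delicate step; it would also be the crux of the alternative approach that bypasses \textbf{(a)} and \textbf{(b)} altogether by tracking $I_j$ and $D_j$ directly through the RSK insertion algorithm, inserting one letter at a time and controlling how each statistic changes at each step.
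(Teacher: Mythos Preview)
The paper does not supply its own proof of this statement; it is quoted from Stanley's \textit{Enumerative Combinatorics} (Theorem~7.23.13) and used as a black box, so there is no in-paper argument to compare yours against.

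That said, your outline is the standard route to Greene's theorem and is sound. Ingredient~(c) is correct and cleanly argued; (b) is correctly invoked. For (a) you rightly flag where the real work lies and correctly describe its shape (local re-routing of the optimal family through the three positions touched by an elementary Knuth move). The case analysis does go through, but it is a bit more delicate than ``adjust one subsequence'': when two of your $j$ increasing pieces both meet the three affected positions, the repair may require swapping the tails of two pieces rather than tweaking a single one. One caution on your shortcut for the decreasing identity: deducing it from the increasing identity via ``the reversed word has insertion shape $\lambda'$\,'' risks circularity, since that shape fact is itself most often proved \emph{using} Greene's theorem; your first suggestion --- rerunning the entire argument with rows and columns (equivalently, increasing and decreasing) interchanged --- is the safe, self-contained route.
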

(Note that $\lambda_i'$ can be interpreted as the length of the $i$-th column of the diagram of $\lambda$.)

 For instance, consider the permutation $2, 4, 7, 3, 5, 1, 6, 8$ from our running example, which maps to the pair: $$\left(\,\young(4,27,13568) \hspace{0.3cm},\hspace{0.3cm} \young(6,45,12378)\,\right)$$ The longest increasing subsequence of the permutation has length $5$ (e.g., $(2, 3, 5, 6, 8)$, though this is not a unique choice). The longest decreasing subsequence has length $3$ (e.g., $(7, 5, 1)$) and these facts are reflected in the observation that the tableaux have width $5$ and height $3$. The longest $2$-decreasing subsequence has length $5$ (e.g., $(4,7,3,5,1)$, which can be decomposed into $(4,3)$ and $(7,5,1)$) which corresponds to the fact that the first two columns contain a total of $5$ boxes.

\section{Iterating on the reading word of the recording tableau}\label{sect:fixed-points}

We now study the dynamical system obtained by iterating the RSK algorithm on the reading word of the recording tableau.

\begin{definition}
 Let $S_n$ be the set of all permutations of $1,2,\ldots,n$ and define $f:S_n\to S_n$ as follows.  For a permutation $\pi\in S_n$, let $\mathrm{RSK}(\pi)=(S,T)$ and let $w$ be the reading word of $T$. Then we set $f(\pi)=w$.
\end{definition}

\tikzstyle{permblock} = [draw, text centered, minimum height=1em]
\tikzstyle{block}  = [rectangle, draw, text width=3.5cm, text centered, minimum height=1em]
\tikzstyle{lblock} = [rectangle, draw, text width=5cm, text centered, minimum height=1em]
\tikzstyle{rblock} = [rectangle, draw, text width=3.5cm, text centered, minimum height=1em]

\begin{figure}
    \centering
    
\begin{tikzpicture}[node distance=2cm]
\node (shape131) {$\yng(1,1,2)$};
\node (rw11) [permblock, right of=shape131, xshift=0.7cm] {$3,2,4,1$};
\node (rw12) [permblock, right of=rw11, xshift=0.3cm] {$4,2,3,1$};
\node (rw13) [permblock, right of=rw12, xshift=0.3cm] {$4,1,3,2$};
\node (rw14) [permblock, right of=rw13, xshift=0.7cm] {$2,4,3,1$};
\node (rw15) [permblock, right of=rw14, xshift=0.3cm] {$1,4,3,2$};
\node (rw16) [permblock, right of=rw15, xshift=0.3cm] {$3,4,2,1$};
\node (shape132)[below of=shape131]{$\yng(1,1,2)$};
\node (rw21) [permblock, right of=shape132, xshift=3cm] {$4,2,1,3$};
\node (rw22) [permblock, right of=rw21, xshift=5.3cm] {$4,3,1,2$};
\node (shape133)[below of=shape132]{$\yng(1,1,2)$};
\node (rw31) [permblock, right of=shape133, xshift=6.65cm] {$3,2,1,4$};

\draw[->] (rw11) -- (rw21) ;
\draw[->] (rw12) -- (rw21) ;
\draw[->] (rw13) -- (rw21) ;
\draw[->] (rw14) -- (rw22) ;
\draw[->] (rw15) -- (rw22) ;
\draw[->] (rw16) -- (rw22) ;
\draw[->] (rw21) -- (rw31) ;
\draw[->] (rw22) -- (rw31) ;
\draw[->] (rw31.south)arc(-158:158:7mm);

\node (shape134)[below of=shape133]{$\yng(1,3)$};
\node (rw41) [permblock, right of=shape134, xshift=0.7cm] {$2,3,1,4$};
\node (rw42) [permblock, right of=rw41, xshift=0.3cm] {$1,3,2,4$};
\node (rw43) [permblock, right of=rw42, xshift=0.3cm] {$1,4,2,3$};
\node (rw44) [permblock, right of=rw43, xshift=0.7cm] {$1,3,4,2$};
\node (rw45) [permblock, right of=rw44, xshift=0.3cm] {$1,2,4,3$};
\node (rw46) [permblock, right of=rw45, xshift=0.3cm] {$2,3,4,1$};
\node (shape135)[below of=shape134]{$\yng(1,3)$};
\node (rw51) [permblock, right of=shape135, xshift=3cm] {$3,1,2,4$};
\node (rw52) [permblock, right of=rw51, xshift=5.3cm] {$4,1,2,3$};
\node (shape136)[below of=shape135]{$\yng(1,3)$};
\node (rw61) [permblock, right of=shape136, xshift=6.65cm] {$2,1,3,4$};

\draw[->] (rw41) -- (rw51) ;
\draw[->] (rw42) -- (rw51) ;
\draw[->] (rw43) -- (rw51) ;
\draw[->] (rw44) -- (rw52) ;
\draw[->] (rw45) -- (rw52) ;
\draw[->] (rw46) -- (rw52) ;
\draw[->] (rw51) -- (rw61) ;
\draw[->] (rw52) -- (rw61) ;
\draw [->] (rw61.south)arc(-158:158:7mm);

\node (shape221)[below of=shape136]{$\yng(2,2)$};
\node (rw71) [permblock, right of=shape221, xshift=3cm] {$2,1,4,3$};
\node (rw72) [permblock, right of=rw71, xshift=5.3cm] {$3,1,4,2$};
\node (shape222)[below of=shape221]{$\yng(2,2)$};
\node (rw81) [permblock, right of=shape222, xshift=6.65cm] {$2,4,1,3$};
\node (shape223)[below of=shape222]{$\yng(2,2)$};
\node (rw91) [permblock, right of=shape223, xshift=6.65cm] {$3,4,1,2$};
\draw[->] (rw71) -- (rw81) ;
\draw[->] (rw72) -- (rw81) ;
\draw[->] (rw81) -- (rw91) ;
\draw [->] (rw91.south)arc(-158:158:7mm);

\node (shape041)[below of=shape223]{$\yng(4)$};
\node (rw101) [permblock, right of=shape041, xshift=6.65cm] {$1,2,3,4$};
\node (shape401)[below of=shape041]{$\yng(1,1,1,1)$};
\node (rw111) [permblock, right of=shape401, xshift=6.65cm] {$4,3,2,1$};]
\draw [->] (rw101.south)arc(-158:158:7mm);
\draw [->] (rw111.south)arc(-158:158:7mm);
\end{tikzpicture}
\caption{The directed graph of the discrete dynamical system formed by iterating $f$ on all permutations of size $4$.  The shape of the insertion tableau of each permutation is shown at left.}
    \label{fig:first-graph}
\end{figure}
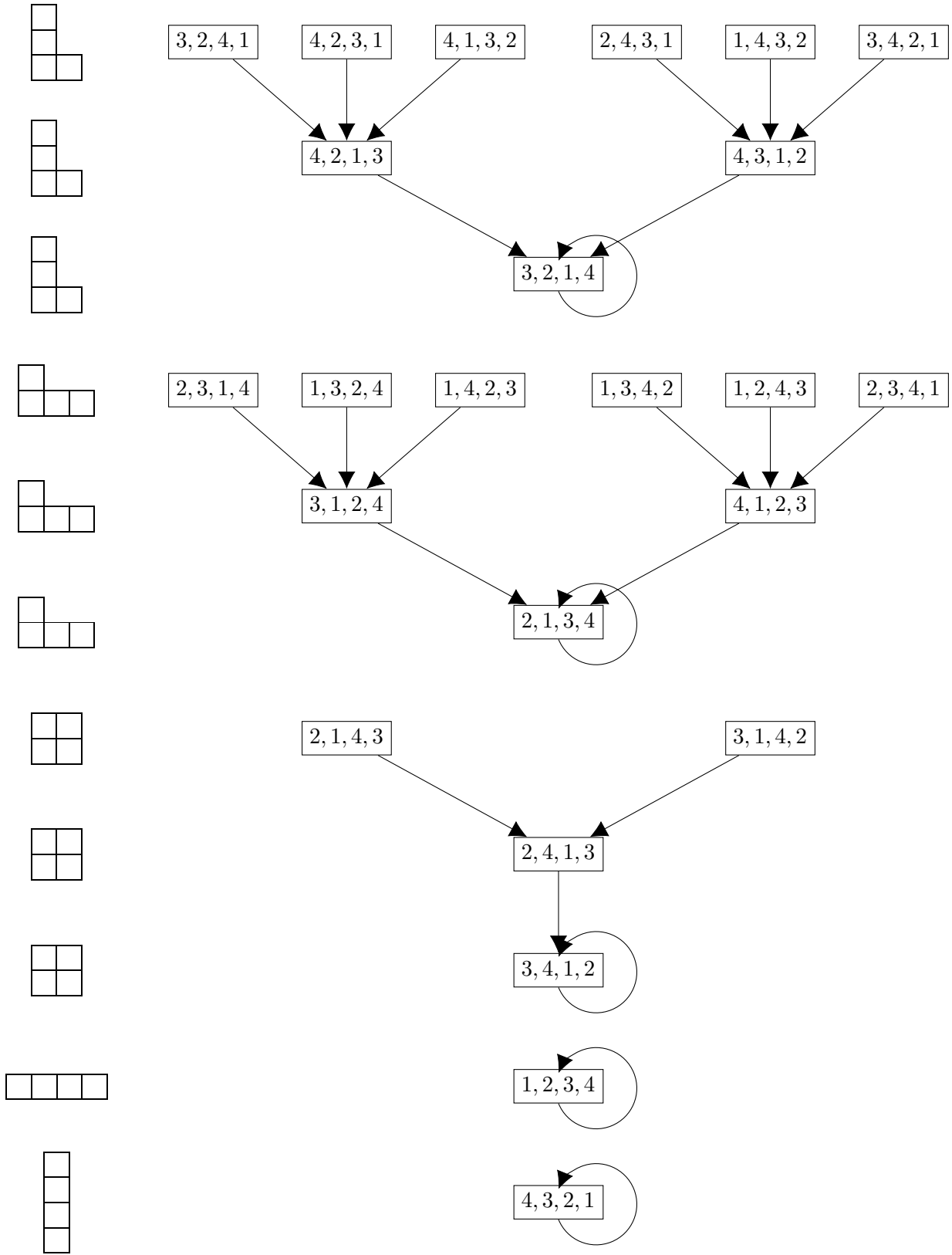

The map $f$ on $S_4$ is illustrated in Figure \ref{fig:first-graph}.  Notice that every permutation reaches a \textbf{fixed point} (a permutation $\pi$ for which $f(\pi)=\pi$) in at most two iterations of $f$.  Proving this fact is the main goal of this section.

We first show that the shape of the tableaux assigned to a permutation is invariant under $f$.  

\begin{lemma}\label{lem:constant-shape}
Starting with a pair of SYT $(S,T)$ of the same shape, let $(S',T')$ be the result of applying the RSK algorithm to the reading word of $T$.  Then $(S',T')$ have the same shape as $(S,T)$.
\end{lemma}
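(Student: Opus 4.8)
The plan is to use Theorem~\ref{thm:shape}, which says that the shape of the RSK tableaux is governed by the numbers $I_j(w)$ (and $D_j(w)$). So it suffices to prove that the reading word $w$ of $T$ has the same $I_j$ and $D_j$ statistics as a word whose RSK-shape is already known to be $\lambda$. The natural such word is a reading word of $T$ itself read in a \emph{different} order, or better yet, we observe that $T$ is the recording tableau of some permutation $\pi$ with $\mathrm{RSK}(\pi)=(S,T)$; by Theorem~\ref{inverse}, $\pi^{-1}$ has $\mathrm{RSK}(\pi^{-1})=(T,S)$, so $T$ is the \emph{insertion} tableau of $\pi^{-1}$. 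Then Theorem~\ref{insertion-same} applies directly: the reading word $w$ of $T$, when run through RSK, produces insertion tableau $T$ again, which has shape $\lambda$. Hence $S'=T$ (as an insertion tableau of shape $\lambda$), and since $S'$ and $T'$ share a shape, $(S',T')$ has shape $\lambda$, the same as $(S,T)$.

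More carefully, here is the sequence of steps I would carry out. First, since $(S,T)$ is a pair of SYT of the same shape $\lambda$, there is a (unique) permutation $\pi$ with $\mathrm{RSK}(\pi)=(S,T)$ because RSK is a bijection. Second, apply Theorem~\ref{inverse} to get $\mathrm{RSK}(\pi^{-1})=(T,S)$; in particular $T$ is the insertion tableau associated to $\pi^{-1}$, and it has shape $\lambda$. Third, let $w$ be the reading word of $T$ and apply Theorem~\ref{insertion-same} with the SYT ``$S$'' of that theorem taken to be our $T$: running RSK on $w$ yields insertion tableau equal to $T$. Fourth, by definition the recording tableau $T'$ of $w$ has the same shape as the insertion tableau $S'=T$, namely $\lambda$. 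Therefore $(S',T')$ has shape $\lambda$, matching $(S,T)$.

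I expect essentially no obstacle here: the lemma is a direct splice of two quoted theorems (the inverse-swap theorem and the reading-word-fixes-insertion-tableau theorem) plus the fact that RSK is a bijection. The only thing to be careful about is the logical direction: the statement is phrased starting from an arbitrary pair $(S,T)$ of equal shape, not from a permutation, so one must first invoke surjectivity of RSK to produce the underlying permutation $\pi$ before the other theorems can be applied. An alternative, more self-contained route avoiding Theorem~\ref{inverse} would be to argue directly from Theorem~\ref{thm:shape} that the reading word of any SYT of shape $\lambda$ has $I_j = \lambda_1+\cdots+\lambda_j$ and $D_j = \lambda_1'+\cdots+\lambda_j'$ (the increasing subsequences correspond to picking entries weakly down-and-right through the rows, the decreasing ones to picking one entry per row); but the bijection-plus-two-theorems argument above is shorter, so that is the one I would write up.
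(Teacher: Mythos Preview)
Your proof is correct and, at its core, is the same argument the paper gives: apply Theorem~\ref{insertion-same} to the SYT $T$ to conclude $S'=T$, so the shape is preserved. However, your steps~1 and~2 (producing $\pi$ via surjectivity of RSK and then invoking Theorem~\ref{inverse} to exhibit $T$ as an insertion tableau) are unnecessary. Theorem~\ref{insertion-same} as stated applies to an \emph{arbitrary} standard Young tableau, not only to one that arises as an insertion tableau of some word; since $T$ is given to be an SYT, you may apply the theorem to it directly. The paper's proof does exactly this and is correspondingly one line shorter.
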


\begin{proof}
Because a recording tableau is defined to copy the shape of its insertion tableau at each step, we know that $S$ and $T$ both have some shape $\lambda$, and that $S'$ and $T'$ have some shape $\mu$. By Theorem \ref{insertion-same}, we know that using the reading word of $T$ to construct a new insertion tableau results in the same tableau $T$. Therefore $T=S'$, and thus $\lambda = \mu$.
\end{proof}

This result is useful because it allows us to limit our investigation to tableaux of a fixed shape $\lambda$.

We now show that, given a permutation that is the reading word of some tableau, the recording tableau is uniquely determined.

\begin{lemma}\label{lem:reading-recording}
Let $\lambda=(\lambda_1,\ldots,\lambda_k)$ be a partition.  Then there is a fixed tableau $T_\lambda$ such that, when RSK is applied to the reading word of any SYT of shape $\lambda$, the recording tableau is always $T_\lambda$.

In particular, $T_\lambda$ is the tableau formed by filling the bottom entries of the first $\lambda_k$ columns with $1,2,\ldots,\lambda_k$, then filling the bottommost empty squares of the first $\lambda_{k-1}$ columns with $\lambda_k+1,\ldots,\lambda_k+\lambda_{k-1}$, and so on.  
\end{lemma}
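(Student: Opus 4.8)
The plan is to understand precisely what happens when RSK is run on the reading word $w$ of an SYT $S$ of shape $\lambda$. We already know from Theorem \ref{insertion-same} that the insertion tableau is $S$ itself, so only the recording tableau needs to be computed, and we must show it does not depend on which $S$ of shape $\lambda$ we chose. The key observation is that the reading word $w$ is read row by row from top to bottom, and the entries within each row of $S$ are increasing; so $w$ is a concatenation $w = R_k R_{k-1} \cdots R_1$, where $R_i$ is the (increasing) sequence of entries in row $i$ of $S$ (counting rows from the bottom, so $R_k$ is the top row of length $\lambda_k$, and $R_1$ is the bottom row of length $\lambda_1$). The recording tableau records, at each insertion step, the box by which the shape grows; so I want to track how the shape of the insertion tableau grows as we feed in $R_k$, then $R_{k-1}$, and so on.

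The main claim to establish by induction on $i$ (from $i=k$ down to $i=1$) is: \emph{after inserting the block $R_k R_{k-1} \cdots R_i$, the insertion tableau equals the top $k-i+1$ rows of $S$ (namely rows $i, i+1, \ldots, k$), sitting in rows $1$ through $k-i+1$ of the diagram.} For the base case, inserting the increasing sequence $R_k$ just builds a single bottom row equal to row $k$ of $S$, since each entry exceeds all previous ones. For the inductive step, suppose the current insertion tableau $P$ consists of rows $i+1, \ldots, k$ of $S$ stacked up, and we now insert the increasing sequence $R_i$ (the entries of row $i$ of $S$, in increasing order). I claim each successive entry of $R_i$ bumps its way down to create exactly one new box at the end of the current bottom row, pushing the old bottom row up by one and leaving everything above unchanged — equivalently, the net effect is to insert $R_i$ as a new bottom row and shift rows $i+1,\ldots,k$ of $S$ up one level, recovering rows $i,\ldots,k$ of $S$. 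The reason is that every entry of row $i$ of $S$ is smaller than the entry directly above it (the column-strict condition on $S$), and within $R_i$ the entries are increasing, so inserting $R_i$ into the current bottom row $R_{i+1}$ is a "horizontal strip" insertion: each new entry displaces exactly one entry of $R_{i+1}$ into the row above, where that row is precisely $R_{i+2}$, and again the displaced entries go in increasing order and land in columns where they are still smaller than the entry above — so the cascade is governed by the column-strictness of $S$ and terminates by adding a box at the end of each affected row with no new box created above row $k-i+1$. The upshot is that during the insertion of block $R_i$, the shape grows by $\lambda_i$ boxes, one added to the end of each of rows $1$ through (at most) $\lambda_i$'s worth — more precisely, it grows by placing new boxes in row $1$ and bumping a vertical cascade; I should track carefully that the \emph{set} of boxes added while inserting $R_i$ is exactly the set of boxes in rows $1, \ldots, ?$... the cleanest formulation: after all of $R_k \cdots R_i$ is inserted the shape is $(\lambda_i, \lambda_{i+1}, \ldots, \lambda_k)$ read as row lengths from the bottom, so the $\lambda_i$ new boxes added during block $R_i$ are exactly those not present after block $R_{i+1}$, i.e. the boxes that complete the shape $(\lambda_{i+1},\ldots,\lambda_k)$ up to $(\lambda_i,\lambda_{i+1},\ldots,\lambda_k)$.

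Given this, the recording tableau is determined: the first $\lambda_k$ insertion steps (block $R_k$) create the boxes of the bottom $\lambda_k$ columns' first row... more precisely, the steps in block $R_k$ are numbered $1, \ldots, \lambda_k$ and create, in left-to-right order, the boxes of row $k$ of the final shape; so those boxes receive labels $1, \ldots, \lambda_k$ — but in the final diagram row $k$ is the top row, of length $\lambda_k$, which occupies the bottom box of each of the first $\lambda_k$ columns. Wait — I need to be careful about French vs. the stacking: I will phrase it as the statement in the lemma, that the bottom entries of the first $\lambda_k$ columns get $1, \ldots, \lambda_k$. Then block $R_{k-1}$ uses steps $\lambda_k+1, \ldots, \lambda_k + \lambda_{k-1}$ and creates $\lambda_{k-1}$ new boxes which are exactly the bottommost empty squares of the first $\lambda_{k-1}$ columns, so they receive $\lambda_k+1, \ldots, \lambda_k+\lambda_{k-1}$ in left-to-right order; and so on. This is exactly the description of $T_\lambda$ in the statement, and since the step numbers and the box positions depend only on $\lambda$ (not on the entries of $S$), the recording tableau is $T_\lambda$ regardless of the choice of $S$. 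Finally I should check $T_\lambda$ is genuinely an SYT: rows increase left to right because within each block the labels increase left to right and each block's labels exceed the previous block's; columns increase bottom to top because a higher box in a column belongs to an earlier block (a longer row, hence a row $R_j$ with larger $j$, inserted earlier) — wait, that's backwards, so I must double-check the orientation, but this is a routine verification once the block structure is pinned down.

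The main obstacle is the inductive step: rigorously proving that inserting the increasing row $R_i$ into the tableau made of rows $i+1, \ldots, k$ of $S$ produces exactly rows $i, \ldots, k$ of $S$, with the new boxes appearing in the claimed positions. This requires a careful bumping argument that leans on both the row-increasing and column-increasing properties of $S$ to control where each bumped entry lands. I expect to handle it by tracking a single entry of $R_i$ at a time and showing by a secondary induction (on position within $R_i$, or on the row index of the cascade) that the bump path is a vertical strip and that column-strictness of $S$ guarantees each bumped value slots in at the end of the next row up. Everything after that — reading off step numbers and box positions to identify $T_\lambda$, and checking it is an SYT — is bookkeeping.
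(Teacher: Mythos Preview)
Your approach is correct and is essentially the same as the paper's: both argue row-by-row, observing that inserting the top row of $S$ builds a single bottom row, and that each subsequent row $R_i$ bumps the existing columns straight up (by column-strictness of $S$) before placing its excess entries to the right, so the new boxes in each block are exactly the ``bottommost empty squares'' of the first $\lambda_i$ columns in left-to-right order. The paper's proof is a brief sketch of this same bumping picture, while you have laid out the induction and flagged the column-by-column bump-path verification more explicitly; your hesitation about the column-increasing check for $T_\lambda$ is easily resolved (later blocks carry larger labels and fill higher boxes).
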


\begin{remark}\label{rem:T-lambda}
Another way of thinking of $T_\lambda$ is as the tableau formed by first applying ``upwards gravity'' to the boxes in $\lambda$ to make them sit on the ceiling, then filling the rows with $1,2,\ldots,n$ in order from bottom to top (and left to right across each row), and finally letting gravity pull the boxes down to the floor again.  Below is an example of $T_{(4,3,2)}$ at left and, at right, the result of applying upwards gravity to the boxes:
$$\young(67,348,1259) \hspace{2cm} \young(6789,345,12)$$
\end{remark}

We now prove Lemma \ref{lem:reading-recording}.

\begin{proof}
Let $w$ be the reading word of an SYT $S$ of shape $\lambda$.  Since we read the rows of $S$ from top to bottom, the top row (of length $\lambda_k$) will be inserted first to form a row of $\lambda_k$ entries, making the first $\lambda_k$ squares on the bottom row of the recording tableau of $\mathrm{RSK}(w)$ be labeled $1,2,\ldots,\lambda_k$.  We then insert the second row of $S$, which bumps up all elements that we already inserted (since the columns of $S$ are increasing) and then puts the rest of the second row to the right, which gives the next round of entries in the recording tableau as described.  This process continues in the same fashion for each row inserted, yielding recording tableau $T_\lambda$.
\end{proof}

\begin{theorem}\label{thm:eventually-fixed}
  For any permutation $\pi$, two or fewer applications of the map $f$ will result in a fixed point.
\end{theorem}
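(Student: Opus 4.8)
The plan is to show that $f^2(\pi)$ is always a fixed point, i.e.\ that $f^3(\pi) = f^2(\pi)$. By Lemma~\ref{lem:constant-shape}, all iterates of $f$ starting from $\pi$ have tableaux of a single fixed shape $\lambda$. The key observation is Lemma~\ref{lem:reading-recording}: as soon as we apply $f$ to a permutation that happens to be the reading word of \emph{some} SYT of shape $\lambda$, the resulting recording tableau is forced to be the single tableau $T_\lambda$, and hence $f$ of that permutation is exactly the reading word of $T_\lambda$. So the strategy reduces to two sub-claims: first, that $f(\pi)$ is always the reading word of an SYT (of shape $\lambda$); and second, that $f(\pi) = \mathrm{readingword}(T_\lambda)$ is itself a fixed point of $f$.

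For the first sub-claim: $f(\pi)$ is by definition the reading word of the recording tableau $T$ of $\mathrm{RSK}(\pi)$, and $T$ is an SYT of shape $\lambda$, so $f(\pi)$ is automatically a reading word of an SYT. Then Lemma~\ref{lem:reading-recording} applies to $f(\pi)$: since $f(\pi)$ is the reading word of an SYT of shape $\lambda$, applying RSK to it produces recording tableau $T_\lambda$, so $f^2(\pi) = f(f(\pi))$ is the reading word of $T_\lambda$. It remains to show $f\big(\mathrm{readingword}(T_\lambda)\big) = \mathrm{readingword}(T_\lambda)$. The word $w_\lambda := \mathrm{readingword}(T_\lambda)$ is again a reading word of an SYT of shape $\lambda$ (namely $T_\lambda$ itself), so by Lemma~\ref{lem:reading-recording} once more, the recording tableau of $\mathrm{RSK}(w_\lambda)$ is $T_\lambda$; hence $f(w_\lambda) = \mathrm{readingword}(T_\lambda) = w_\lambda$, as desired. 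Thus $f^2(\pi)$ is a fixed point, proving the theorem.

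I would write this out cleanly as: (i) invoke Lemma~\ref{lem:constant-shape} to fix the shape $\lambda$; (ii) note $f(\pi)$ is a reading word of an SYT of shape $\lambda$, hence $f^2(\pi) = w_\lambda$ by Lemma~\ref{lem:reading-recording}; (iii) note $w_\lambda$ is also such a reading word, hence $f(w_\lambda) = w_\lambda$, again by Lemma~\ref{lem:reading-recording}. The only point that needs a little care — the main (minor) obstacle — is making sure the hypothesis of Lemma~\ref{lem:reading-recording} really is satisfied at each application: it requires the input permutation to be literally the reading word of an SYT of shape $\lambda$, and we must check this both for $f(\pi)$ (immediate, since it's the reading word of the recording tableau, which has shape $\lambda$) and for $w_\lambda$ (immediate, since $T_\lambda \in \mathrm{SYT}(\lambda)$). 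No genuinely hard step arises; the theorem is essentially an immediate consequence of the two preceding lemmas, and the proof is short.
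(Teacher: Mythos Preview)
Your proposal is correct and follows essentially the same route as the paper's own proof: fix the shape $\lambda$, apply Lemma~\ref{lem:reading-recording} once to see that $f^2(\pi)$ is the reading word of $T_\lambda$, and apply it again to see that this word is fixed by $f$. The paper additionally invokes Theorem~\ref{insertion-same} to track the insertion tableau at each step (obtaining the pairs $(T,T_\lambda)$ and then $(T_\lambda,T_\lambda)$), but since $f$ depends only on the recording tableau, your streamlined version that tracks only that component is equally valid.
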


\begin{proof}
  Let $\pi$ be an arbitrary permutation. Apply the RSK algorithm to $\pi$ to obtain the SYT pair $(S,T)$. Take the reading word of $T$ and produce a new SYT pair.  By Theorem \ref{insertion-same} and Lemma \ref{lem:constant-shape}, this pair must be $(T,R)$, where $R$ is the same shape as $T$, say shape $\lambda$. By Lemma \ref{lem:reading-recording}, we know that $R$ is the unique recording tableau $T_\lambda$ generated by all of the permutations which are the reading words of tableaux of shape $\lambda$.  
  
  Applying the RSK algorithm to the reading word of $R=T_\lambda$ will then yield a pair with $R$ as the insertion tableau (again by Theorem \ref{insertion-same} and Lemma \ref{lem:constant-shape}). But this then generates $R=T_\lambda$ as the recording tableau again by Lemma \ref{lem:reading-recording}. Thus its pair of SYT are $(R,R)$. Additionally, because it was generated from the reading word of $R$, it is a fixed point.
\end{proof}

Because all permutations lead into a fixed point within two applications of $f$, we can now classify all cycles of the dynamical system. We first recall the definition of a cycle.

\begin{definition}\label{def:cycle}
A \textbf{cycle} of a dynamical system $g:A\to A$ is a sequence of elements $$a,g(a),g(g(a)),\ldots, g^n(a)$$ such that the output $g^n(a)$ is equal to the initial input $a$. If it takes a minimum of $n$ steps to return to the input, we say the cycle is an  \textbf{$n$-cycle}, and if $n>1$ we say the cycle is \textbf{nontrivial}.
\end{definition}

We now have the following corollary.

\begin{corollary}
  The map $f$ has no nontrivial cycles.
\end{corollary}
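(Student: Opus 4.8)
The plan is to derive this directly from Theorem \ref{thm:eventually-fixed}, which already does almost all of the work. Suppose, for contradiction, that $f$ has a nontrivial cycle $a, f(a), f^2(a), \ldots, f^n(a) = a$ with $n > 1$ minimal. Then in particular $f^n(a) = a$, so iterating $f$ indefinitely from $a$ never leaves this cycle: for every $k$, $f^k(a)$ is one of the $n$ distinct elements $a, f(a), \ldots, f^{n-1}(a)$. But Theorem \ref{thm:eventually-fixed} says that $f^2(a)$ is a fixed point, i.e. $f(f^2(a)) = f^2(a)$. Since $f^2(a)$ lies on the cycle, this says that some element $b = f^2(a)$ of the cycle satisfies $f(b) = b$.

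The remaining step is to observe that a cycle containing a fixed point must be trivial. If $b$ is on the $n$-cycle, then $b = f^j(a)$ for some $j$, and $f(b) = b$ forces $f^{j+1}(a) = f^j(a)$; applying $f$ repeatedly gives $f^{j+m}(a) = f^j(a)$ for all $m \ge 0$. In particular $a = f^n(a) = f^{j}(a) \cdot$ (reducing indices mod the eventual period) — more cleanly: since the orbit of $a$ is eventually constant equal to $b$, and the orbit returns to $a$ after $n$ steps, we get $a = b$, hence $f(a) = f(b) = b = a$, so the cycle has length $1$, contradicting $n > 1$. Therefore no nontrivial cycle exists.

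I do not anticipate any real obstacle here; the only mild subtlety is bookkeeping with iteration indices to conclude that a periodic point which is eventually fixed must already be a fixed point. One can sidestep even that by arguing contrapositively at the level of the whole orbit: every orbit reaches a fixed point (by Theorem \ref{thm:eventually-fixed}), and once an orbit reaches a fixed point it stays there forever, so the orbit of any point is eventually constant; a sequence that is both eventually constant and periodic with period $n$ must be constant, forcing $n = 1$. I would present the short contradiction argument of the previous paragraph as the proof.
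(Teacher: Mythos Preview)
Your proposal is correct and follows exactly the approach the paper intends: the paper states this corollary immediately after Theorem~\ref{thm:eventually-fixed} without a written proof, treating it as an obvious consequence of the fact that every orbit reaches a fixed point in at most two steps. Your write-up simply spells out the (straightforward) deduction that an eventually-constant periodic orbit must be constant; you could streamline it to the one-line observation that if $n\ge 2$ then $a=f^n(a)=f^2(a)$ is already a fixed point, contradicting minimality of $n>1$.
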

  
These two results completely describe the dynamics of the iterated process $f$.  To fully understand the combinatorics of the system, we now classify and analyze the fixed points themselves. 

\begin{theorem}\label{thm:one-fixed}
The map $f$ has exactly one fixed point of each possible Young diagram shape.
\end{theorem}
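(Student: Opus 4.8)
The plan is to describe the set of fixed points explicitly and then count it. The key claim is that $\pi\in S_n$ is a fixed point of $f$ if and only if $\pi$ is the reading word of the tableau $T_\lambda$ from Lemma \ref{lem:reading-recording} for some partition $\lambda\vdash n$. Granting this, the theorem follows by a short counting argument.

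For the forward direction I would argue as follows. Suppose $f(\pi)=\pi$ and write $\mathrm{RSK}(\pi)=(S,T)$, so by definition $\pi$ is the reading word of $T$. Since $\pi$ is the reading word of the SYT $T$, Theorem \ref{insertion-same} forces the insertion tableau of $\mathrm{RSK}(\pi)$ to equal $T$; hence $S=T$. Let $\lambda$ be the common shape of $S$ and $T$. Because $\pi$ is the reading word of an SYT of shape $\lambda$ (namely $T$ itself), Lemma \ref{lem:reading-recording} identifies the recording tableau of $\mathrm{RSK}(\pi)$ as $T_\lambda$; but that recording tableau is $T$, so $T=T_\lambda$ and $\pi$ is the reading word of $T_\lambda$. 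For the converse, let $\pi$ be the reading word of $T_\lambda$. Since $T_\lambda$ is itself an SYT of shape $\lambda$, Lemma \ref{lem:reading-recording} gives that the recording tableau of $\mathrm{RSK}(\pi)$ is $T_\lambda$, whose reading word is $\pi$ by construction, so $f(\pi)=\pi$.

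To finish, consider the assignment $\lambda\mapsto(\text{reading word of }T_\lambda)$ from partitions of $n$ to fixed points of $f$. By the equivalence just established this assignment is onto. It is also injective: applying RSK to the reading word of $T_\lambda$ returns, by Theorem \ref{insertion-same}, the insertion tableau $T_\lambda$, which has shape $\lambda$, so distinct partitions produce permutations whose RSK shapes differ, hence distinct permutations. Therefore there is exactly one fixed point of each Young diagram shape.

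I expect the only delicate point to be the bookkeeping in the forward direction, namely correctly chaining Theorem \ref{insertion-same} (to conclude $S=T$) with Lemma \ref{lem:reading-recording} (to pin down $T=T_\lambda$); the remaining steps are essentially immediate from the results already proved. As an alternative (and even shorter) route, one may note that the proof of Theorem \ref{thm:eventually-fixed} already shows that $f(f(\pi))$ equals the reading word of $T_\lambda$, where $\lambda$ is the shape of $\pi$; thus every orbit of permutations of shape $\lambda$ terminates at this one permutation, which simultaneously yields existence and uniqueness of the fixed point of shape $\lambda$.
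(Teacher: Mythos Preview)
Your proof is correct. Your primary argument differs mildly from the paper's: you give a direct characterization of the fixed points (showing $\pi$ is fixed iff $\pi$ is the reading word of $T_\lambda$) using only Theorem~\ref{insertion-same} and Lemma~\ref{lem:reading-recording}, and then count. The paper instead leans on Theorem~\ref{thm:eventually-fixed}: since every permutation of shape $\lambda$ flows to the pair $(T_\lambda,T_\lambda)$, any fixed point of shape $\lambda$ must be the permutation corresponding to that pair. This is exactly the ``alternative (and even shorter) route'' you sketch in your final paragraph. Your direct approach has the small advantage of being independent of the two-step convergence result, while the paper's dynamical argument is marginally quicker; substantively the two are equivalent and rest on the same two ingredients.
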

\begin{proof}
The proof of Theorem \ref{thm:eventually-fixed} shows that every initial permutation $\pi$ whose RSK tableaux have shape $\lambda$ eventually maps to the pair $(T_\lambda,T_\lambda)$ where $T_\lambda$ is defined as in Lemma \ref{lem:reading-recording}.  The permutation $\pi_\lambda$ associated to this pair $(T_\lambda,T_\lambda)$ must therefore be the unique fixed point, since by Theorem \ref{thm:eventually-fixed}, if there were a different fixed point for shape $\lambda$, it would eventually map to this one, and therefore equals $\pi_\lambda$.
\end{proof}

This means we have a precise enumeration of the fixed points of the system as well.

\begin{corollary}
  The number of fixed points of $f$ on $S_n$  is equal to the number of partitions of $n$.
\end{corollary}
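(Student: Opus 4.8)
The plan is to derive this immediately from Theorem \ref{thm:one-fixed} together with the basic combinatorial fact that standard Young tableaux of size $n$ come in exactly $p(n)$ possible shapes, where $p(n)$ denotes the number of partitions of $n$. Concretely, Theorem \ref{thm:one-fixed} asserts that $f$ has exactly one fixed point of each possible Young diagram shape (of size $n$), so the set of fixed points of $f$ on $S_n$ is in bijection with the set of shapes that actually occur, and I would verify that every partition shape does occur.

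First I would recall that for a permutation $\pi \in S_n$, the pair $\mathrm{RSK}(\pi) = (S,T)$ consists of standard Young tableaux whose common shape $\lambda$ is a partition of $n$; conversely, by the bijectivity of RSK (cited after the construction of the recording tableau), every partition $\lambda \vdash n$ arises as the shape of some permutation, since $\SYT(\lambda)$ is nonempty (the row-superstandard tableau is one such tableau) and hence $\SYT(\lambda)\times\SYT(\lambda)$ is nonempty. Thus the possible Young diagram shapes referenced in Theorem \ref{thm:one-fixed} are exactly the partitions of $n$.

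Next I would combine these observations: by Lemma \ref{lem:constant-shape} each fixed point has a well-defined shape $\lambda \vdash n$, so the fixed points are partitioned into classes indexed by $\lambda \vdash n$; by Theorem \ref{thm:one-fixed} each such class is a singleton (the permutation $\pi_\lambda$ corresponding to $(T_\lambda, T_\lambda)$); and since every $\lambda \vdash n$ is realized, the number of fixed points equals the number of partitions of $n$. This is essentially a one-line deduction, so the corollary follows.

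There is no real obstacle here — the only point requiring a sentence of care is confirming that "each possible Young diagram shape" in Theorem \ref{thm:one-fixed} ranges over all of $\{\lambda : \lambda \vdash n\}$ and not some proper subset, which is handled by the nonemptiness of $\SYT(\lambda)$ for every partition $\lambda$. Everything else is a direct appeal to the already-established results.
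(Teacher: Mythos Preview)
Your proposal is correct and matches the paper's intended argument: the paper leaves this corollary unproved, treating it as immediate from Theorem~\ref{thm:one-fixed}, and your deduction---one fixed point per shape, and the shapes are exactly the partitions of $n$---is precisely that intended one-line inference. Your extra sentence verifying that every $\lambda\vdash n$ actually occurs (via nonemptiness of $\SYT(\lambda)$) is a reasonable bit of care that the paper omits.
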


We now analyze the properties of the fixed point permutation $\pi_\lambda$.  The following is an immediate result of the proof of Theorem \ref{thm:eventually-fixed}, so we omit the proof.

\begin{lemma}\label{lem:fixed-point}
The unique fixed point $\pi_\lambda$ for shape $\lambda$ satisfies $\mathrm{RSK}(\pi_{\lambda})=(T_\lambda,T_\lambda)$, and the reading word of $T_\lambda$ is $\pi_\lambda$.
\end{lemma}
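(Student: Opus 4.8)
The statement to prove is Lemma \ref{lem:fixed-point}: the unique fixed point $\pi_\lambda$ for shape $\lambda$ satisfies $\mathrm{RSK}(\pi_\lambda) = (T_\lambda, T_\lambda)$, and the reading word of $T_\lambda$ is $\pi_\lambda$.

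The plan is to simply extract both claims from the proof of Theorem \ref{thm:eventually-fixed}, since the paper explicitly says this is ``an immediate result'' of that proof. First I would recall the setup: starting from $\pi_\lambda$, which by Theorem \ref{thm:one-fixed} is the fixed point whose RSK tableaux have shape $\lambda$, the argument in Theorem \ref{thm:eventually-fixed} shows that iterating $f$ lands at a permutation whose RSK pair is $(T_\lambda, T_\lambda)$ and which is the reading word of $T_\lambda$. Since $\pi_\lambda$ is already a fixed point, it must \emph{equal} that terminal permutation. So $\mathrm{RSK}(\pi_\lambda) = (T_\lambda, T_\lambda)$ and $\pi_\lambda$ is the reading word of $T_\lambda$.

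More carefully, here is how I would phrase the two halves. For the RSK identity: since $\pi_\lambda$ is a fixed point of $f$, applying $f$ repeatedly never leaves $\pi_\lambda$; in particular $f^2(\pi_\lambda) = \pi_\lambda$. But the proof of Theorem \ref{thm:eventually-fixed} shows $f^2$ applied to \emph{any} permutation with shape-$\lambda$ tableaux produces a permutation with RSK pair $(T_\lambda, T_\lambda)$. Hence $\mathrm{RSK}(\pi_\lambda) = (T_\lambda, T_\lambda)$. For the reading-word claim: by definition of $f$, the image $f(\sigma)$ is always the reading word of the recording tableau of $\mathrm{RSK}(\sigma)$. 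Applying this with $\sigma = \pi_\lambda$, and using that $\pi_\lambda = f(\pi_\lambda)$ together with the recording tableau of $\mathrm{RSK}(\pi_\lambda)$ being $T_\lambda$, we conclude $\pi_\lambda$ equals the reading word of $T_\lambda$.

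I expect no real obstacle here — this is a bookkeeping corollary, and the only thing to be careful about is making explicit that $\pi_\lambda$ being a genuine fixed point (not just eventually periodic) is what upgrades the ``eventually maps to $(T_\lambda, T_\lambda)$'' conclusion of Theorem \ref{thm:eventually-fixed} into an equality statement about $\pi_\lambda$ itself. Since the paper chooses to omit the proof entirely, the cleanest presentation is a one- or two-sentence remark citing the proof of Theorem \ref{thm:eventually-fixed} and Lemma \ref{lem:reading-recording}, exactly as the excerpt does.
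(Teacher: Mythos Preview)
Your proposal is correct and matches the paper's approach exactly: the paper omits the proof as an immediate consequence of the argument in Theorem \ref{thm:eventually-fixed}, and your extraction---using $f(\pi_\lambda)=\pi_\lambda$ together with the fact that $f^2$ of any shape-$\lambda$ permutation has RSK pair $(T_\lambda,T_\lambda)$---is precisely the intended justification.
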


By Lemmas \ref{lem:fixed-point} and \ref{lem:involution}, we can conclude the following.
\begin{corollary}
  Each fixed point $\pi_\lambda$ of $f$ is an involution.
\end{corollary}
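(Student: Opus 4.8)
The plan is simply to combine the two facts already in hand. First I would recall from Lemma~\ref{lem:fixed-point} that the fixed point $\pi_\lambda$ satisfies $\mathrm{RSK}(\pi_\lambda)=(T_\lambda,T_\lambda)$; in particular its insertion tableau and its recording tableau coincide. Then I would invoke Corollary~\ref{lem:involution}, which says that a permutation is an involution exactly when its insertion and recording tableaux are equal. Applying that characterization to $\pi_\lambda$ immediately yields that $\pi_\lambda$ is an involution, completing the proof.

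There is essentially no obstacle to overcome: the substantive content was already supplied by the proof of Theorem~\ref{thm:eventually-fixed}, which produced the pair $(T_\lambda,T_\lambda)$, and by Corollary~\ref{lem:involution}, which translates equality of tableaux into the involution property. The one small point to verify is that Lemma~\ref{lem:fixed-point} records both that $\pi_\lambda$ is a genuine permutation in $S_n$ and that $\mathrm{RSK}(\pi_\lambda)$ has equal components, so that Corollary~\ref{lem:involution} applies verbatim. (As an alternative one could instead verify directly, using the explicit description of $T_\lambda$ in Lemma~\ref{lem:reading-recording} and Remark~\ref{rem:T-lambda}, that the reading word of $T_\lambda$ equals its own inverse, but this is a strictly longer route and is not needed.)
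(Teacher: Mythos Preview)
Your proposal is correct and matches the paper's own argument exactly: the paper simply cites Lemma~\ref{lem:fixed-point} and Corollary~\ref{lem:involution} and concludes, just as you do. There is nothing to add or change.
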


Finally, we describe geometrically which involutions these permutations are.  Since $\pi_\lambda$ is the reading word of $T_\lambda$, we can write it down explicitly for a given shape.  For instance, in the example of $T_{(4,3,2)}$ shown in Remark \ref{rem:T-lambda}, the reading word is $673481259$.  We see that this is the involution that switches $1$ and $6$, switches $7$ and $2$, and switches $8$ and $5$.  Comparing this to the tableau $T_{(4,3,2)}$:
$$\young(67,348,1259)$$ we see that it is the involution that pairs off the entries by matching each column with itself upside-down (so in this case $3,4$, and $9$ match with themselves and are therefore unchanged by the involution).

In general this property will hold as follows.  The top row of $T_\lambda$ contains the entries $$(\lambda_k+\lambda_{k-1}+\cdots+\lambda_{2})+1,\ldots,(\lambda_k+\cdots+\lambda_{2})+\lambda_k,$$ and these are the first $\lambda_k$ entries of the reading word $\pi_\lambda$.  Meanwhile the numbers $1,2,\ldots,\lambda_k$ are in \textit{positions}
$$(\lambda_k+\lambda_{k-1}+\cdots+\lambda_{2})+1,\ldots,(\lambda_k+\cdots+\lambda_{2})+\lambda_k$$ in the reading word $\pi_\lambda$. Thus these numbers are indeed swapped with $1,2,\ldots,\lambda_k$ in the involution, and they are the top and bottom entries of the first $\lambda_k$ columns.  We similarly can then pair off the second-to-top row of entries with the numbers $\lambda_k+1,\ldots,\lambda_k+\lambda_{k-1}$, and so on.

\section{Variations on the reading word} \label{sect:reading-word}

We now consider what happens if we modify the dynamical system by using variants on the reading word.  We discuss two variants: the \textit{column reading word} and the \textit{reversed reading word}.

\subsection{Column reading word}\label{sec:column}

The \textbf{column reading word} of a standard Young tableau $T$ is the permutation formed by reading down the columns in order from left to right.  For instance, the column reading word of $$\young(4,257,136)$$ is $4,2,1,5,3,7,6$.
It is known (see \cite[Section 2.3]{Fulton}) that inserting the column reading word yields the original tableau, just as in the case of the (row) reading word.  In this case it is even easier to see what the recording tableau will be when we apply RSK to the column reading word of a tableau.

\begin{example}
If we apply RSK to the column reading word $4,2,1,5,3,7,6$ above, first the $4,2,1$ are inserted and form a column, yielding the pair: $$\young(4,2,1)\hspace{2cm} \young(3,2,1)$$ Then, inserting the $5$ and $3$ yields the next column:
$$\young(4,25,13)\hspace{1.5cm}\young(3,25,14)$$
and finally inserting the $7$ and $6$ forms the final column:
$$\left(\,\young(4,257,136)\hspace{0.3cm},\hspace{0.3cm} \young(3,257,146)\,\right)$$
\end{example}

From the example above, and using proof techniques similar to that of Lemma \ref{lem:reading-recording}, we obtain the following lemma, whose proof we omit.

\begin{lemma}\label{lem:col-reading}
For any partition $\lambda$, there is a fixed tableau $Q_\lambda$ such that, when RSK is applied to the column reading word of any SYT of shape $\lambda$, the recording tableau is always $Q_\lambda$.

In particular, $Q_\lambda$ is the tableau formed by filling the first column with $1,2,\ldots,\lambda_1'$ (where $\lambda_1'$ is the length of the first column, or first row of the transpose shape $\lambda'$), then filling the second column with  $\lambda_1'+1,\ldots,\lambda_1'+\lambda_2'$, and so on.  
\end{lemma}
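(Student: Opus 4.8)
The plan is to mimic the proof of Lemma \ref{lem:reading-recording}, replacing ``row by row from the top'' with ``column by column from the left.'' First I would fix an SYT $S$ of shape $\lambda$ and let $w$ be its column reading word, obtained by concatenating the columns of $S$ from left to right, each read from bottom to top. The key structural observation is that, because the entries of $S$ increase up each column and increase left-to-right along each row, inserting the first column $c_1$ of $S$ (which is a decreasing-from-top, i.e.\ increasing-from-bottom, sequence of length $\lambda_1'$) produces a single column of $\lambda_1'$ boxes, and the recording tableau after this stage is exactly the first column filled with $1,2,\ldots,\lambda_1'$.

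Next I would argue inductively on the columns. Suppose the first $j-1$ columns of $S$ have been inserted, and the insertion tableau currently equals the subtableau of $S$ formed by its first $j-1$ columns (this is the column-analogue of Theorem \ref{insertion-same}, which guarantees the column reading word reconstructs $S$; see \cite[Section 2.3]{Fulton}). Now insert the $j$-th column of $S$, read bottom to top: its bottom entry is smaller than the bottom entry of column $j-1$? No---along a row entries increase left to right, so in fact each entry of column $j$ is larger than the entry immediately to its left in $S$. The crucial claim is that inserting column $j$ bumps each current entry in row $i$ straight up within the existing columns until the new box lands at the top of column $j$, so that the new box added to the shape at step corresponding to the $r$-th entry of column $j$ is exactly the box in column $j$, row $r$ of $\lambda$. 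Consequently the recording tableau gets the labels $\lambda_1'+\cdots+\lambda_{j-1}'+1,\ldots,\lambda_1'+\cdots+\lambda_j'$ placed down the $j$-th column in order from bottom to top. Iterating over all columns yields precisely the tableau $Q_\lambda$ described in the statement, and since the construction never depended on the particular choice of $S$ (only on its shape $\lambda$), the recording tableau is always $Q_\lambda$.

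The main obstacle I expect is justifying cleanly that inserting a bottom-to-top column of $S$ into the current tableau (equal to the first $j-1$ columns of $S$) simply appends a new column on the right, with each insertion bumping entries vertically and terminating one box higher each time. One way to handle this rigorously is to invoke Theorem \ref{insertion-same}'s proof technique or, more directly, the known fact (\cite[Section 2.3]{Fulton}) that the column reading word inserts to give back $S$; then the shape grows from the first $j-1$ columns to the first $j$ columns upon inserting column $j$, and a short argument using the increasing-along-rows property of $S$ pins down the \emph{order} in which the new boxes (which must be exactly the boxes of column $j$) appear: the bottom box of the new column must be added first (it is the insertion of the smallest, bottommost entry of column $j$, which is too small to extend any row and hence settles into row $1$ of the new column), then the next, and so on up. That ordering is exactly what forces the labels down column $j$ to be consecutive and increasing from bottom to top, giving $Q_\lambda$.

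Alternatively, and perhaps most economically, one notes that the column reading word of $S$ for shape $\lambda$ is the (row) reading word of the transpose tableau $S^{T}$ for shape $\lambda'$, up to the fact that transposing a tableau reverses the roles of rows and columns; applying Lemma \ref{lem:reading-recording} to $S^{T}$ and transposing the resulting $T_{\lambda'}$ then identifies the recording tableau, and a direct check shows the transpose of $T_{\lambda'}$ is exactly the column-filled tableau $Q_\lambda$ in the statement. I would likely present the direct inductive argument as the main proof and mention the transpose shortcut as a remark, since the paper says the proof is omitted anyway.
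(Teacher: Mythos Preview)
Your overall plan---to mimic the proof of Lemma~\ref{lem:reading-recording} column by column---is exactly what the paper intends (the paper omits the proof and says precisely this).  However, there are two genuine errors in the execution.

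First, you have the reading direction inside each column backwards.  The column reading word in this paper reads each column from \emph{top to bottom}, i.e.\ as a \emph{decreasing} sequence (see the example: the first column of $\young(4,257,136)$ is read as $4,2,1$).  This matters: inserting a decreasing sequence builds a single column, whereas inserting the bottom-to-top (increasing) sequence would build a single row, and your conclusion ``produces a single column of $\lambda_1'$ boxes'' would be false.  Once you fix the direction, the inductive step works as in the paper's Example: inserting the $j$-th column $a_{\lambda_j'}>a_{\lambda_j'-1}>\cdots>a_1$ (where $a_i=S[i,j]$) into the tableau consisting of the first $j-1$ columns of $S$, the entry $a_{\lambda_j'}$ lands at the end of row~1 (since $a_{\lambda_j'}\ge a_1=S[1,j]>S[1,j-1]$), then each subsequent $a_k$ bumps the previous one up one row, so the new boxes appear in column~$j$ from bottom to top.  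That is the clean justification you were looking for.

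Second, your ``transpose shortcut'' is incorrect as stated: the column reading word of $S$ is \emph{not} the row reading word of $S^T$.  In the paper's example the column reading word of $S$ is $4,2,1,5,3,7,6$, while the row reading word of $S^T$ is $6,7,3,5,1,2,4$.  (Transposing turns ``down column~$j$'' into ``right-to-left along row~$j$,'' not into the row reading order.)  So Lemma~\ref{lem:reading-recording} cannot be applied to $S^T$ to deduce this lemma; drop that alternative.
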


By an identical analysis to that in Section \ref{sect:fixed-points}, we can then obtain the following results, which we state as one summary theorem.

\begin{theorem}
 Let $c:S_n\to S_n$ be the operation of applying RSK to a permutation $\pi$ to get a pair $(S,T)$ and then taking the \textit{column} reading word of the recording tableau $T$.  Then:
 \begin{itemize}
     \item The map $c$ preserves the shape of the Young tableaux obtained under RSK at each step.
     \item If $c$ is iterated starting with any permutation $\pi\in S_n$, it reaches a fixed point in at most two steps.
     \item There is exactly one fixed point $\sigma_\lambda$ for each partition $\lambda$, and its RSK insertion pair is $(Q_\lambda,Q_\lambda)$.  Moreover, $\sigma_\lambda$ is the column reading word of $Q_\lambda$.
 \end{itemize}
\end{theorem}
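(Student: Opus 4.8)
The plan is to mirror, essentially verbatim, the development in Section \ref{sect:fixed-points}, substituting the column reading word for the row reading word and $Q_\lambda$ for $T_\lambda$ throughout. The three bullet points correspond exactly to Lemma \ref{lem:constant-shape}, Theorem \ref{thm:eventually-fixed}, and Theorems \ref{thm:one-fixed} and \ref{thm:eventually-fixed} combined, so the main work is checking that each ingredient carries over. The two facts we need as input are already available: first, that inserting the column reading word of an SYT recovers that SYT (cited from \cite[Section 2.3]{Fulton} in the text just above), which plays the role of Theorem \ref{insertion-same}; and second, Lemma \ref{lem:col-reading}, which plays the role of Lemma \ref{lem:reading-recording} and is already stated.

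First I would prove the shape-invariance bullet: given $(S,T)$ of the same shape $\lambda$, applying RSK to the column reading word of $T$ yields $(S',T')$ where $S'=T$ by the column analogue of Theorem \ref{insertion-same}, hence $S'$ and $T'$ have shape $\lambda$; this is the exact analogue of Lemma \ref{lem:constant-shape}. Next, for the ``fixed point in at most two steps'' bullet, I would run the argument of Theorem \ref{thm:eventually-fixed}: starting from $\pi$ with RSK tableaux of shape $\lambda$, the first application of $c$ produces a permutation whose RSK pair is $(T,Q_\lambda)$ (insertion tableau $T$ by the column analogue of Theorem \ref{insertion-same}, recording tableau $Q_\lambda$ by Lemma \ref{lem:col-reading}), and the second application produces the pair $(Q_\lambda,Q_\lambda)$ by the same two facts; since the resulting permutation is the column reading word of $Q_\lambda$, which is its own insertion tableau, it is a fixed point. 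For the third bullet, I would argue as in Theorem \ref{thm:one-fixed}: every permutation whose RSK shape is $\lambda$ maps under iteration of $c$ to the permutation $\sigma_\lambda$ with RSK pair $(Q_\lambda,Q_\lambda)$, so any fixed point of shape $\lambda$ must equal $\sigma_\lambda$; conversely $\sigma_\lambda$ is a fixed point since it is the column reading word of its own recording tableau $Q_\lambda$. The statement that $\sigma_\lambda$ is the column reading word of $Q_\lambda$ and has RSK pair $(Q_\lambda,Q_\lambda)$ then falls out of this construction, exactly as in Lemma \ref{lem:fixed-point}.

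I do not anticipate a genuine obstacle here, since the column reading word enjoys all the structural properties of the row reading word that the Section \ref{sect:fixed-points} arguments rely on; the point of the theorem is precisely that nothing new happens. The only place requiring a moment's care is confirming that the column analogue of Theorem \ref{insertion-same} — that RSK applied to the column reading word of an SYT reproduces that SYT — is indeed what \cite[Section 2.3]{Fulton} provides, and that it applies to standard (not just semistandard) tableaux; but this is classical and is already invoked in the surrounding text. Given that, I would simply state the proof as ``the argument is identical to that of Lemma \ref{lem:constant-shape}, Theorem \ref{thm:eventually-fixed}, and Theorem \ref{thm:one-fixed}, with $T_\lambda$ replaced by $Q_\lambda$ and the row reading word replaced by the column reading word,'' perhaps spelling out the two-step computation $\pi \mapsto (\text{word of }T) \mapsto (\text{pair }(T,Q_\lambda)) \mapsto (\text{pair }(Q_\lambda,Q_\lambda))$ for the reader's benefit, as the authors have in fact chosen to do by stating it as one summary theorem with the proof left to ``an identical analysis.''
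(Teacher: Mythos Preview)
Your proposal is correct and matches the paper's own treatment exactly: the paper does not give a separate proof but simply says ``By an identical analysis to that in Section~\ref{sect:fixed-points}, we can then obtain the following results,'' and your plan spells out precisely that analysis, with the column analogue of Theorem~\ref{insertion-same} and Lemma~\ref{lem:col-reading} playing the roles of Theorem~\ref{insertion-same} and Lemma~\ref{lem:reading-recording}.
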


This theorem shows that, in some sense, the iterated process using the column reading word is more straightforward than that of the row reading word.  Indeed, $Q_\lambda$ is easier to define than $T_\lambda$.  Interestingly, the involution $\sigma_\lambda$ is still obtained by taking the columns of $Q_\lambda$ and switching the opposite entries (reflected about the horizontal) in pairs in each column, in the same manner that $\pi_\lambda$ is obtained from $T_\lambda$.

\subsection{Reversed reading word}\label{sec:reverse}

We now consider the more subtle case of the \textit{reverse} of the reading word, formed by reading the word from right to left.  As an example, the reverse of $3,1,5,2,4$ is $4,2,5,1,3$.

\begin{definition}
Define $r:S_n\to S_n$ as follows.  For a permutation $\pi \in S_n$, let $\mathrm{RSK}(\pi)=(S,T)$ and let $w'$ be the reversed reading word of $T$.  Then we set $r(\pi)=w'$.
\end{definition}

This operation is fundamentally different from using the row and column reading words, because it does not generate the same insertion tableau.
In fact, the shape is no longer even preserved, as seen by the following lemma.

\begin{lemma} \label{lem:transposition}
Suppose a standard Young tableau $T$ has reading word $w$. Let $w'$ be the reverse of the string $w$. Then the insertion tableau of the application of RSK to $w'$ has a shape which is the \text{transpose} of the shape of $T$.
\end{lemma}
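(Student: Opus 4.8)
The plan is to combine Theorem~\ref{thm:shape} (the description of the RSK shape via $k$-increasing and $k$-decreasing subsequences) with the simple observation that reversing a word interchanges the roles of increasing and decreasing subsequences. Concretely, if $w = w_1,\ldots,w_n$ is the reading word of $T$ and $w' = w_n,\ldots,w_1$ is its reverse, then a subsequence of $w$ is increasing precisely when the corresponding (position-reversed) subsequence of $w'$ is decreasing, and vice versa. More generally, a subsequence of $w$ is $k$-increasing if and only if the reversed subsequence of $w'$ is $k$-decreasing, because a decomposition into $k$ disjoint increasing chains in $w$ becomes a decomposition into $k$ disjoint decreasing chains in $w'$. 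Hence $I_j(w') = D_j(w)$ and $D_j(w') = I_j(w)$ for every $j$.

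Next I would invoke Theorem~\ref{thm:shape} twice. Let $\lambda$ be the shape of $T$ (equivalently, by Theorem~\ref{insertion-same}, the shape of the insertion tableau of $w$), and let $\mu$ be the shape of the insertion tableau of $w'$. Applying Theorem~\ref{thm:shape} to $w$ gives $\lambda_1 + \cdots + \lambda_j = I_j(w)$ and $\lambda_1' + \cdots + \lambda_j' = D_j(w)$ for all $j$. Applying it to $w'$ gives $\mu_1 + \cdots + \mu_j = I_j(w') = D_j(w) = \lambda_1' + \cdots + \lambda_j'$ for all $j$. Since the partial sums of $\mu$ agree with the partial sums of $\lambda'$ for every $j$, we conclude $\mu_j = \lambda_j'$ for all $j$, i.e.\ $\mu = \lambda'$, which is exactly the claim. (As a consistency check, one can also verify $\mu_1' + \cdots + \mu_j' = D_j(w') = I_j(w) = \lambda_1 + \cdots + \lambda_j$, so $\mu' = \lambda$, consistent with $\mu = \lambda'$.)

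The only real content here is the subsequence bookkeeping in the first paragraph — making sure that ``$k$-increasing'' for $w$ corresponds bijectively to ``$k$-decreasing'' for $w'$, which requires noting both the length-preserving correspondence on subsequences and the fact that a decomposition into $k$ monotone chains transfers under reversal. This is the step I would write out most carefully, though it is essentially immediate once stated. I would not need anything about the structure of reading words specifically; the lemma is really a statement about an arbitrary permutation $w$ and its reverse, with the hypothesis that $w$ is a reading word only entering implicitly through the paper's framing (and through the fact, via Theorem~\ref{insertion-same}, that $T$ and the insertion tableau of $w$ have the same shape). I expect no genuine obstacle; the proof is short modulo citing Theorem~\ref{thm:shape}.
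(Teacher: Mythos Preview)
Your proposal is correct and follows essentially the same approach as the paper: the paper's proof likewise observes that reversing $w$ interchanges increasing and decreasing subsequences (and hence $k$-increasing with $k$-decreasing), and then invokes Theorem~\ref{thm:shape} to conclude that the RSK shape of $w'$ is the transpose of that of $w$. Your write-up is in fact a bit more explicit about matching the partial sums $\mu_1+\cdots+\mu_j = \lambda_1'+\cdots+\lambda_j'$, which is a nice touch.
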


\begin{proof}
Let $w$ be the reading word $a_1,a_2,a_3,\ldots,a_n$. Thus, $w' = a_n, a_{n-1}, a_{n-2}, \ldots a_1$.  Let $a_{i_1} > a_{i_2} >  \cdots >a_{i_k}$ be a decreasing subsequence of $w$.  Then in $w'$, we have $a_{i_k} < a_{i_{k-1}} < \cdots < a_{i_2} < a_{i_1}$ is an increasing subsequence.  Similarly, increasing subsequences of $w$ become decreasing subsequences of $w'$.

Let the length of the longest $k$-increasing sequence in $w$ be $x$ and the length of the longest $k$-decreasing sequence in $w$ be $y$. Then the longest $k$-decreasing subsequence for $w'$ would have length $x$ and the longest $k$-increasing subsequence for $w'$ would be $y$. Thus, by Theorem \ref{thm:shape}, the rows and columns of $\mathrm{RSK}(w)$ and $\mathrm{RSK}(w')$ would be swapped, resulting in a transposed shape.
\end{proof}

We now show that in fact something much stronger holds: the insertion tableau of the reversed reading word of any tableau is simply the transposed tableau itself.

\begin{theorem}\label{theorem:transpose-insertion}
  Let $T$ be a standard Young tableau with reading word $w$.  Then inserting the reversed word $w'$ yields the transposed tableau $T'$.
\end{theorem}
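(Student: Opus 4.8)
The plan is to proceed by induction on the number of rows of $T$, peeling off the bottom row and using the fact that the reversed reading word of $T$ is the reversed reading word of the rows above, followed by the bottom row read right-to-left. More precisely, write $w = u\, b_{\ell}, b_{\ell-1}, \ldots, b_1$ where $b_1 < b_2 < \cdots < b_\ell$ is the bottom row of $T$ and $u$ is the (forward) reading word of the tableau $\widehat{T}$ obtained by deleting the bottom row; then $w' = b_1, b_2, \ldots, b_\ell, \widehat{w}'$ where $\widehat{w}'$ is the reversed reading word of $\widehat{T}$. Wait — I need to be careful about which end things attach to; let me instead organize the induction by building $T$ up one \emph{column} at a time from the left, since Lemma~\ref{lem:transposition} already tells us the target shape is $T'$, and the first column of $T$ becomes the first row of $T'$.

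Here is the cleaner approach I would actually carry out. Induct on the number of columns $m$ of $T$. Let $C_1, \ldots, C_m$ be the columns of $T$ read bottom-to-top, so $C_1 = (c_{1,1} < c_{2,1} < \cdots)$ etc. The key structural observation is that the reversed reading word $w'$ can be read off the columns: reading $w$ left-to-right-then-top-to-bottom and reversing, one gets that $w'$ lists, going left to right, exactly the entries column by column — but I must verify the precise interleaving. The honest statement is: let $T_{<k}$ denote $T$ restricted to its first $k-1$ columns and $w_{<k}'$ its reversed reading word; I claim $w' $ factors so that inserting it equals first inserting $w_{<m}'$ (giving, by induction, the transpose $(T_{<m})'$, i.e. the tableau whose rows are $C_1, \ldots, C_{m-1}$) and then inserting the entries of $C_m$ in increasing order. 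So the crux reduces to a single claim: \textbf{if $U$ is a standard Young tableau whose rows are $C_1, \ldots, C_{m-1}$ (as increasing sequences), and $C_m$ is a column of $T$ sitting to the right of $C_{m-1}$ with $|C_m| \le |C_{m-1}|$ and $C_m$ componentwise interleaving correctly with the others, then inserting the entries of $C_m$ in increasing order into $U$ appends a new row $C_m$ at the top.} This last claim is a direct computation with the insertion algorithm: inserting an increasing sequence $c_{1,m} < c_{2,m} < \cdots$ one at a time, and using that in $T$ the entry $c_{i,m}$ is larger than $c_{i, m-1}$ (same row of $T$) but the column-strictness of $T$ controls the bumping, so that $c_{i,m}$ bumps $c_{i+1,m-1}$-ish entries up one row and the sequence cascades to deposit exactly $C_m$ as a fresh top row.

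The main obstacle will be making the interleaving bookkeeping in that last claim precise — i.e., tracking exactly which entry gets bumped at each insertion step and showing the cascade terminates by placing the whole column $C_m$ as a new top row rather than spilling partway. I expect this to follow from two facts about the tableau $T$: (i) within each row of $T$, entries increase left to right, so $c_{i,m} > c_{i,m-1}$; and (ii) within each column of $T$, entries increase bottom to top, so $c_{i,m} < c_{i+1,m}$. Combined with the inductive description of $U$ (its $i$-th row from the bottom is the $i$-th column $C_i$ of $T$), inserting $c_{1,m}$ into the bottom row $C_1$ of $U$ bumps the smallest entry of $C_1$ exceeding $c_{1,m}$; I would argue via (i)–(ii) that this bumped entry is precisely $c_{2,1}$-adjacent in a way that propagates the insertion straight up a "staircase", ending with $c_{|C_m|, m}$ placed in a brand-new row. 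It may be cleanest to phrase this via the well-known fact that inserting a decreasing word builds a column and inserting an increasing word "slides" along, or to invoke Theorem~\ref{thm:shape} together with Lemma~\ref{lem:transposition} to pin down the shape and then use Theorem~\ref{insertion-same}-style uniqueness to force the entries — but I would double-check whether shape plus one known entry-pattern suffices, since a shape argument alone does not determine the tableau.

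An alternative, possibly slicker route I would also consider: use the classical fact (Schützenberger) that reversing a word transposes the insertion tableau and \emph{evacuates} (or otherwise canonically transforms) the recording tableau, i.e. $\mathrm{RSK}(w') = ((P(w))^t, \text{something}(Q(w)))$; here since $w$ is itself a reading word, $P(w) = T$ by Theorem~\ref{insertion-same}, so $P(w') = T^t = T'$ immediately. If the paper is willing to cite this (it cites Stanley and Fulton liberally), the theorem is a one-line corollary of $P(\mathrm{reverse}(w)) = P(w)^t$ applied to $w = $ reading word of $T$. I would present the self-contained inductive proof as the main argument but remark on this shortcut, so the reader sees why the statement is natural. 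Either way the substantive content is the insertion-algorithm lemma about appending a column, which is where I would spend the bulk of the writeup.
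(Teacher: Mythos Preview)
Your column-induction plan has a genuine gap: the factorization you posit for $w'$ is false, and so is the ``crux claim'' it reduces to. Take $T$ to be the $2\times 2$ tableau with bottom row $1,2$ and top row $3,4$. Its reading word is $w=3,4,1,2$, so $w'=2,1,4,3$. The restriction $T_{<2}$ is the single column $(1,3)$, with reversed reading word $w'_{<2}=1,3$, and $C_2=(2,4)$. Your claimed factorization would make $w'$ Knuth-equivalent to $1,3,2,4$; but inserting $2,1,4,3$ yields the tableau with rows $(1,3)$ and $(2,4)$, while inserting $1,3,2,4$ yields the tableau with rows $(1,2,4)$ and $(3)$. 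These have different shapes, so the words are not even Knuth equivalent. The same example also kills the crux claim directly: inserting $C_2=(2,4)$ in increasing order into the one-row tableau $U=(1,3)$ does \emph{not} append $(2,4)$ as a new top row; the $2$ bumps the $3$ and you end up with the wrong tableau. The entries of the rightmost column of $T$ are scattered through $w'$ (one per reversed row), so there is no way to peel off a column as a contiguous or Knuth-contiguous block.

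Ironically, the approach you abandoned is the one that works and is exactly what the paper does. The reversed reading word \emph{does} factor cleanly as a concatenation of the rows of $T$, each reversed: $w' = R_1^{\mathrm{rev}}\, R_2^{\mathrm{rev}}\cdots R_k^{\mathrm{rev}}$, where $R_i$ is the $i$-th row from the bottom. Each $R_i^{\mathrm{rev}}$ is strictly decreasing, so inserting $R_1^{\mathrm{rev}}$ builds the first column; the paper then checks directly (using the row- and column-inequalities of $T$) that inserting $R_2^{\mathrm{rev}}$ builds the second column, and so on. No induction on a sub-tableau is needed---it is a straight step-by-step trace of the algorithm. Your alternative one-line route via the classical fact $P(w^{\mathrm{rev}})=P(w)^{t}$ combined with Theorem~\ref{insertion-same} is correct and is arguably the ``right'' conceptual proof, but the paper opts for the self-contained row-by-row computation instead.
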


\begin{proof}
Suppose $T$ has reading word $w = w_1,w_2,w_3, \ldots ,w_n$, as illustrated below. The shape of the tableau is a partition of $n$.  Let $\lambda$ be the shape of the tableau $T$, with $\lambda = (\lambda_1,\ldots,\lambda_k)$ where $\lambda_i$ is the size of the $i$-th row from the bottom. We define $$\sigma_{j} = \sum_{i=0}^{j-1} \lambda_{k-i}$$ so that $\sigma_0 = 0$, $\sigma_1 = \lambda_k$, $\sigma_2=\lambda_k+\lambda_{k-1}$, and so on up to $\sigma_k = \lambda_k+\lambda_{k-1}+\cdots+\lambda_1=n$.

\[
\ytableausetup{mathmode, boxsize=4em}
\begin{ytableau}
w_{1} & \none[\dots] & w_{\sigma_1}\\
w_{\sigma_1 + 1} & \none[\dots] & \none[\dots] & w_{\sigma_2}\\
\none[\vdots] & \none[\dots] & \none[\dots] \\
w_{\sigma_{k-2} + 1} & w_{\sigma_{k-2} + 2}  & w_{\sigma_{k-2} + 3} & \none[\dots] & w_{\sigma_{k-1}}\\
w_{\sigma_{k-1} + 1} & w_{\sigma_{k-1} + 2} & w_{\sigma_{k-1} + 3} & \none[\dots] & \none[\dots] & w_{n} \\
\end{ytableau}
\]
    We would first insert $w_n$, which yields the tableau:
\[
\ytableausetup{mathmode, boxsize=2em}
\begin{ytableau}
w_{n}\\
\end{ytableau}
\]

\noindent As we proceed to insert $w_{n-1}, \ldots w_{\sigma_k+1}$, we notice, by the definition of a standard Young tableau, that $w_n > w_{n-1} > \cdots > w_{\sigma_k+1}$. Hence, by the RSK algorithm, when a new box from this row is inserted, all the previously present boxes bump out and the resulting tableau from inserting the first $\lambda_1$ numbers of $w'$ is a single column:

\[
\ytableausetup{mathmode, boxsize=4em}
\begin{ytableau}
w_{n}\\
w_{n-1}\\
\none[\vdots]\\
w_{\sigma_{k-1} + 1}
\end{ytableau}
\]

 We now continue this process for the next $\lambda_{2}$ entries of $w'$.
When we insert $w_{\sigma_{k-1}}$ we will not bump anything out as $$w_{\sigma_{k-1}} > \cdots > w_{\sigma_{k-2} + 1} > w_{\sigma_{k-1} + 1}$$
by semistandardness.  Thus we have, at the next step:
\[
\ytableausetup{mathmode, boxsize=4em}
\begin{ytableau}
w_{n}\\
w_{n-1}\\
\none[\vdots]\\
w_{\sigma_{k-1} + 1} & w_{\sigma_{k-1}}
\end{ytableau}
\]

 Next we insert $w_{\sigma_{k-1} - 1}$ which would bump out $w_{\sigma_{k-1}}$ since $w_{\sigma_{k-1}} > w_{\sigma_{k-1}-1}$ and $$w_{\sigma_{k-1}-1} > \cdots > w_{\sigma_{k-1} + 2} > w_{\sigma_{k-1} + 1}$$ 

\[
\ytableausetup{mathmode, boxsize=4em}
\begin{ytableau}
w_{n}\\
w_{n-1}\\
\none[\vdots]\\
w_{\sigma_{k-1}+2} & w_{\sigma_{k-1}}\\
w_{\sigma_{k-1} + 1} & w_{\sigma_{k-1}-1}
\end{ytableau}
\]
This process then continues similarly for inserting the reverse of the second row of the tableau, and gives us the second column as the transpose of the second row of $T$:
\[
\ytableausetup{mathmode, boxsize=4em}
\begin{ytableau}
w_{n}\\
\none[\vdots] & w_{\sigma_{k-1}}\\
\none[\vdots] & \none[\vdots]\\
w_{\sigma_{k-1} +3} & w_{\sigma_{k-2} + 3}\\
w_{\sigma_{k-1} + 2} & w_{\sigma_{k-2} + 2}\\
w_{\sigma_{k-1} + 1} & w_{\sigma_{k-2} + 1}
\end{ytableau}
\]
By a similar argument, each successive row will be inserted as the next column, and so we have shown that the insertion tableau is the transpose $T'$.
\end{proof}

As an example, consider the tableau
$$\young(6,34,1257)$$
Its reading word is $(6,3,4,1,2,5,7)$ so the reverse reading word is $(7,5,2,1,4,3,6)$.  Applying RSK to this reversed reading word, we obtain the pair:
$$\left(\,\young(7,5,24,136)\hspace{0.3cm},\hspace{0.3cm}\young(4,3,26,157)\,\right)$$
Notice that the insertion tableau is the transpose of the original tableau, as indicated by Theorem \ref{theorem:transpose-insertion}.  The recording tableau is the tableau $Q_\lambda$ for this shape (as defined in Lemma \ref{lem:col-reading}), formed by filling the columns upwards from left to right with the numbers $1,2,\ldots,7$ in order.  Indeed, the proof of Theorem \ref{theorem:transpose-insertion} shows that the recording tableau is always equal to $Q_\lambda$, when inserting the reverse reading word of a tableau of shape $\lambda'$, since the columns of the insertion tableau are built up from bottom to top and in order from left to right.  We therefore have the following corollary.

\begin{corollary}\label{cor:increasing-columns}
  For any partition $\lambda$, let $Q_\lambda$ be the tableau of shape $\lambda$ formed by filling the first column with $1,2,\ldots,\lambda_1'$, then the second column with $\lambda_1'+1,\ldots,\lambda_1'+\lambda_2'$, and so on.
  
  Then when RSK is applied to the reversed reading word of any SYT of the transpose shape $\lambda'$, the recording tableau is always $Q_\lambda$.
\end{corollary}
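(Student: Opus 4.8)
The plan is to read the recording tableau directly off the cell-creation order already established in the proof of Theorem \ref{theorem:transpose-insertion}. Recall that for an SYT $T$ of shape $\lambda'$, the reversed reading word $w'$ splits into consecutive blocks, one per row of $T$, listed in the order bottom row, second row, $\ldots$, top row; and that inserting the $i$-th block builds the $i$-th column of the insertion tableau $T'$, creating that column's cells one at a time from the bottom up. Since the recording tableau is by definition the tableau that records in which step each cell of the shape was added, it suffices to track this creation order and match it against the definition of $Q_\lambda$.

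First I would invoke Theorem \ref{theorem:transpose-insertion} to note that inserting $w'$ yields $T'$, of shape $\lambda$, so the recording tableau has shape $\lambda$ as well. Next, from the proof of that theorem: the first block of $w'$, namely the bottom row of $T$ read right-to-left, is strictly decreasing, so inserting it creates the cells of the first column of $\lambda$ in order from the bottom up; and, inductively, once columns $1,\dots,i-1$ have been built, inserting the $i$-th block opens a new cell at the bottom of column $i$ and then, as its remaining (decreasing) entries are inserted, repeatedly bumps the most recently placed entries one row higher, so that the cells of column $i$ are also created from the bottom up. Thus the cells of $\lambda$ are created column by column from left to right, and within each column from bottom to top.

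It follows that the cell in row $a$ (counting from the bottom) and column $b$ is the $\bigl(\lambda_1'+\cdots+\lambda_{b-1}'+a\bigr)$-th cell created, where $\lambda_j'$ denotes the length of the $j$-th column of $\lambda$, and hence it receives the label $\lambda_1'+\cdots+\lambda_{b-1}'+a$ in the recording tableau. This is exactly the rule defining $Q_\lambda$ (first column filled with $1,\dots,\lambda_1'$, then the second column with $\lambda_1'+1,\dots,\lambda_1'+\lambda_2'$, and so on), so the recording tableau equals $Q_\lambda$, as claimed.

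I do not anticipate a genuine obstacle here, since the combinatorial content is already contained in the proof of Theorem \ref{theorem:transpose-insertion}; the only points requiring care are the bookkeeping of the cell-creation order and, if one prefers a self-contained argument over citing ``by a similar argument'', verifying that the first entry of each block really does open a fresh column rather than bumping within an existing one — which follows, just as in the two column computations displayed in that proof, from the row-increasing and column-increasing properties of $T$.
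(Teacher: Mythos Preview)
Your proposal is correct and matches the paper's own justification essentially line for line: the paper simply observes, just before stating the corollary, that ``the proof of Theorem \ref{theorem:transpose-insertion} shows that the recording tableau is always equal to $Q_\lambda$, \ldots\ since the columns of the insertion tableau are built up from bottom to top and in order from left to right.'' Your write-up is more explicit about the bookkeeping, but the underlying argument is identical.
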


We now have the tools to analyze the dynamics of the iterated map $r:S_n\to S_n$.  Recall the definition of a $2$-cycle from Definition \ref{def:cycle}.

\begin{theorem}\label{thm:two-cycle}
    For any starting permutation $\pi\in S_n$, two or less iterations of the map $r$ will result in a tableau in a cycle with length of at most two.
\end{theorem}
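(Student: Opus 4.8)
The plan is to show that a single application of $r$ to any permutation forces its recording tableau to be one of the column-superstandard tableaux $Q_\mu$ of Corollary~\ref{cor:increasing-columns}, and then to observe that under further iteration these recording tableaux, and hence the permutations themselves, become periodic with period at most two.

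The one-step observation I would establish first is this: for \emph{any} $\sigma\in S_n$, if $\mathrm{RSK}(\sigma)=(S,T)$ with $T$ of shape $\mu$, then $r(\sigma)$ is by definition the reversed reading word of $T$, so by Corollary~\ref{cor:increasing-columns} the recording tableau of $\mathrm{RSK}(r(\sigma))$ equals $Q_{\mu'}$ (and, by Theorem~\ref{theorem:transpose-insertion}, its insertion tableau is $T'$, though we will not need this). The point to emphasize is that $Q_{\mu'}$ has shape $\mu'$ and depends only on the shape $\mu$ of $T$, not on $T$ itself.

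Now iterate, starting from an arbitrary $\pi$ whose recording tableau has shape $\lambda$. Applying the one-step observation repeatedly, the recording tableaux of $r(\pi), r^2(\pi), r^3(\pi),\ldots$ are $Q_{\lambda'}, Q_{\lambda}, Q_{\lambda'}, Q_{\lambda},\ldots$: the shapes simply alternate between $\lambda'$ and $\lambda$, using $(\lambda')'=\lambda$. Since, by the definition of $r$, the permutation $r^{i+1}(\pi)$ is the reversed reading word of the recording tableau of $r^i(\pi)$, we conclude that $r^2(\pi)$ and $r^4(\pi)$ are \emph{both} equal to the reversed reading word of the single tableau $Q_{\lambda'}$, hence $r^2(\pi)=r^4(\pi)$. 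Therefore the orbit $r^2(\pi), r^3(\pi), r^4(\pi)=r^2(\pi),\ldots$ is periodic of period dividing $2$, so $r^2(\pi)$ — reached from $\pi$ in two iterations — lies on a cycle of length $1$ or $2$, as claimed.

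The only place any care is required is in the transpose bookkeeping: checking that the shape of the recording tableau genuinely transposes at each step (immediate from $Q_{\mu'}$ having shape $\mu'$, or from Lemma~\ref{lem:transposition}), and that consequently the recording tableaux cycle between $Q_\lambda$ and $Q_{\lambda'}$ with period two. It is worth noting that we never use anything about the recording tableau of the original $\pi$ — it may be arbitrary — only the recording tableaux of $r^i(\pi)$ for $i\ge 1$, which are exactly the rigid tableaux $Q_\lambda$ and $Q_{\lambda'}$; this is what makes the argument collapse so quickly.
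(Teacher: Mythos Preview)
Your proof is correct and follows essentially the same approach as the paper: both use Corollary~\ref{cor:increasing-columns} to show that after one step the recording tableau is forced to be $Q_{\lambda'}$, and hence the recording tableaux alternate between $Q_{\lambda'}$ and $Q_\lambda$, giving a cycle of length at most two after two iterations. Your version is marginally cleaner in that you track only the recording tableaux and explicitly note that the insertion tableaux are irrelevant here, whereas the paper also computes them (they are needed only for the subsequent corollary on self-conjugate shapes).
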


\begin{proof}
  Let $\pi$ be a permutation, let $(S,T)=\mathrm{RSK}(\pi)$, and let $\lambda$ be the shape of the tableaux  $S$ and $T$. Applying the map $r$ gives a permutation $\pi'$ whose RSK pair is  $\mathrm{RSK}(\pi')=(S',Q_{\lambda'})$ where $S'$ has shape $\lambda'$, the transpose of the shape $\lambda$ (by Lemma \ref{lem:transposition}). Here, $Q_{\lambda'}$ has its columns filled in order by $1, 2, 3, \ldots, n$ from bottom to top, starting with the leftmost column, as described by Corollary \ref{cor:increasing-columns}.
  
  A second application of $r$ results in a permutation $\pi''$ satisfying $\mathrm{RSK}(\pi'')=(S'',Q_\lambda)$ where $S''$ and $Q_\lambda$ have the original shape $\lambda$.  Here, $S''$ is the transpose $Q_{\lambda'}'$ of $Q_{\lambda'}$ by Theorem \ref{theorem:transpose-insertion} and thus has its rows filled with the numbers $1, 2, 3, \ldots, n$ in order, from right to left in each row and then from bottom to top.
  
  This shows that any starting permutation of shape $\lambda$ will map to $(Q_{\lambda'}',Q_\lambda)$ within two applications of $r$.  Notice that another application of $r$ then yields the pair $(Q_{\lambda}',Q_{\lambda'})$, and it then repeats in a $2$-cycle (or a $1$-cycle if the two pairs above are equal).  Thus, our ending permutation $\pi''$ will loop back to itself after two iterations and therefore must be part of a cycle with length of at most two.
\end{proof}

To observe how this process works, examine the dynamics of the permutation $\pi = (3,5,1,2,6,7,4)$. Applying RSK results in the SYT pair $(S, T)$, with $S$ on the left and $T$ on the right.
$$\left(\,\young(356,1247)\hspace{0.3cm},\hspace{0.3cm}\young(347,1256)\,\right)$$
We take the reverse reading word of $T$, which is $\pi' = (6,5,2,1,7,4,3)$. Applying RSK to this will generate this new pair of SYT $S', T'$.
$$\left(\,\young(6,57,24,13)\hspace{0.3cm},\hspace{0.3cm}\young(4,37,26,15)\,\right)$$
Here you can notice that $S'$ is the transpose of $T$ and that $T'$ has its columns filled in order from bottom to top and left to right. Taking the reverse reading word of $T'$ gives $\pi'' = (5,1,6,2,7,3,4)$. Applying RSK generates the SYT pair $(S'', T'')$ in the same shape as the original.
$$\left(\, \young(567,1234)\hspace{0.3cm}, \hspace{0.3cm}\young(246,1357) \, \right)$$
Notice that this is precisely the pair $(Q_{(4,3)}',Q_{(4,3)})$.  Applying $r$ one more time yields the pair$$\left(\,\young(7,56,34,12) \hspace{0.3cm}, \hspace{0.3cm}\young(4,37,26,15)\,\right)$$ and the process then repeats in a $2$-cycle.

In some situations, this $2$-cycle collapses to a fixed point.  A partition $\lambda$ is \textbf{self-conjugate} if $\lambda'=\lambda$.

\begin{corollary}
  For the process described above, all permutations whose insertion tableaux have a self-conjugate shape result in a $1$-cycle, and all those with a non self-conjugate shape result in a $2$-cycle.
\end{corollary}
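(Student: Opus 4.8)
The plan is to leverage the explicit description of the stable $2$-cycle obtained in the proof of Theorem \ref{thm:two-cycle}, namely that every permutation whose insertion tableau has shape $\lambda$ eventually enters the cycle consisting of the two pairs $(Q_{\lambda'}', Q_\lambda)$ and $(Q_\lambda', Q_{\lambda'})$. By Definition \ref{def:cycle}, this is a $1$-cycle precisely when these two pairs coincide, and a (genuine) $2$-cycle otherwise. So the whole statement reduces to showing: $(Q_{\lambda'}', Q_\lambda) = (Q_\lambda', Q_{\lambda'})$ as pairs of tableaux if and only if $\lambda = \lambda'$.

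First I would handle the easy direction: if $\lambda$ is self-conjugate, then $\lambda' = \lambda$, so $Q_{\lambda'} = Q_\lambda$ and hence $Q_{\lambda'}' = Q_\lambda'$; both coordinates of the two pairs then literally agree, giving a $1$-cycle. For the converse, suppose the two pairs are equal. Comparing second coordinates forces $Q_\lambda = Q_{\lambda'}$; in particular these two tableaux have the same underlying shape, and the shape of $Q_\mu$ is $\mu$ by the definition in Corollary \ref{cor:increasing-columns}, so $\lambda = \lambda'$, i.e. $\lambda$ is self-conjugate. (Comparing first coordinates gives the same conclusion via $Q_{\lambda'}'$ having shape $\lambda$ and $Q_\lambda'$ having shape $\lambda'$.) Thus non-self-conjugate shapes give a minimal period of exactly $2$, and self-conjugate shapes give a fixed point.

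The only mild subtlety — and the one place to be careful rather than a real obstacle — is to confirm that when $\lambda \ne \lambda'$ the cycle is genuinely a $2$-cycle and not secretly a fixed point for some accidental reason; but this is immediate since a fixed point of $r$ would be a permutation $\rho$ with $\mathrm{RSK}(\rho) = (S, T)$ and $r(\rho) = \rho$, and by Lemma \ref{lem:transposition} the shape of $\mathrm{RSK}(r(\rho))$ is the transpose of the shape of $\mathrm{RSK}(\rho)$, so a fixed point must have self-conjugate shape. Hence no permutation of non-self-conjugate shape can be a fixed point, and Theorem \ref{thm:two-cycle} then pins it into a $2$-cycle. I would write the argument in roughly three short sentences: one invoking the explicit cycle from Theorem \ref{thm:two-cycle}, one doing the self-conjugate case, and one using the shape-transposition of Lemma \ref{lem:transposition} to rule out fixed points in the non-self-conjugate case.
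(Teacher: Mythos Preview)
Your proposal is correct and follows essentially the same approach as the paper: invoke the explicit cycle $(Q_{\lambda'}',Q_\lambda)\leftrightarrow(Q_\lambda',Q_{\lambda'})$ from Theorem~\ref{thm:two-cycle}, observe that the two pairs coincide when $\lambda=\lambda'$, and in the non-self-conjugate case use the fact that $r$ transposes the shape to rule out a fixed point. The paper phrases the last step as ``after one step, it is in a different shape'' rather than invoking Lemma~\ref{lem:transposition} by name, but the content is the same.
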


\begin{proof}
Using the same terminology as the previous proof, applying RSK to the permutation $\pi''$ results in the SYT pair $(Q_{\lambda'}', Q_\lambda)$ of shape $\lambda$.  Then taking the reverse reading word of $Q_\lambda$ and applying RSK again yields $(Q_\lambda',Q_{\lambda'})$.

Notice that if the shape $\lambda$ is self-conjugate then $Q_{\lambda}'=Q_{\lambda'}'$ and $Q_{\lambda}=Q_{\lambda'}$, so the two pairs are equal and $\pi''$ is a fixed point (forming a $1$-cycle).

 Additionally, if $\lambda$ is not self-conjugate, it must be in a $2$-cycle because after one step, it is in a different shape and Theorem \ref{thm:two-cycle} states that it is at most a $2$-cycle.
\end{proof}

We illustrate these observations in Figure \ref{fig:second-diagram} for the case of $S_4$.

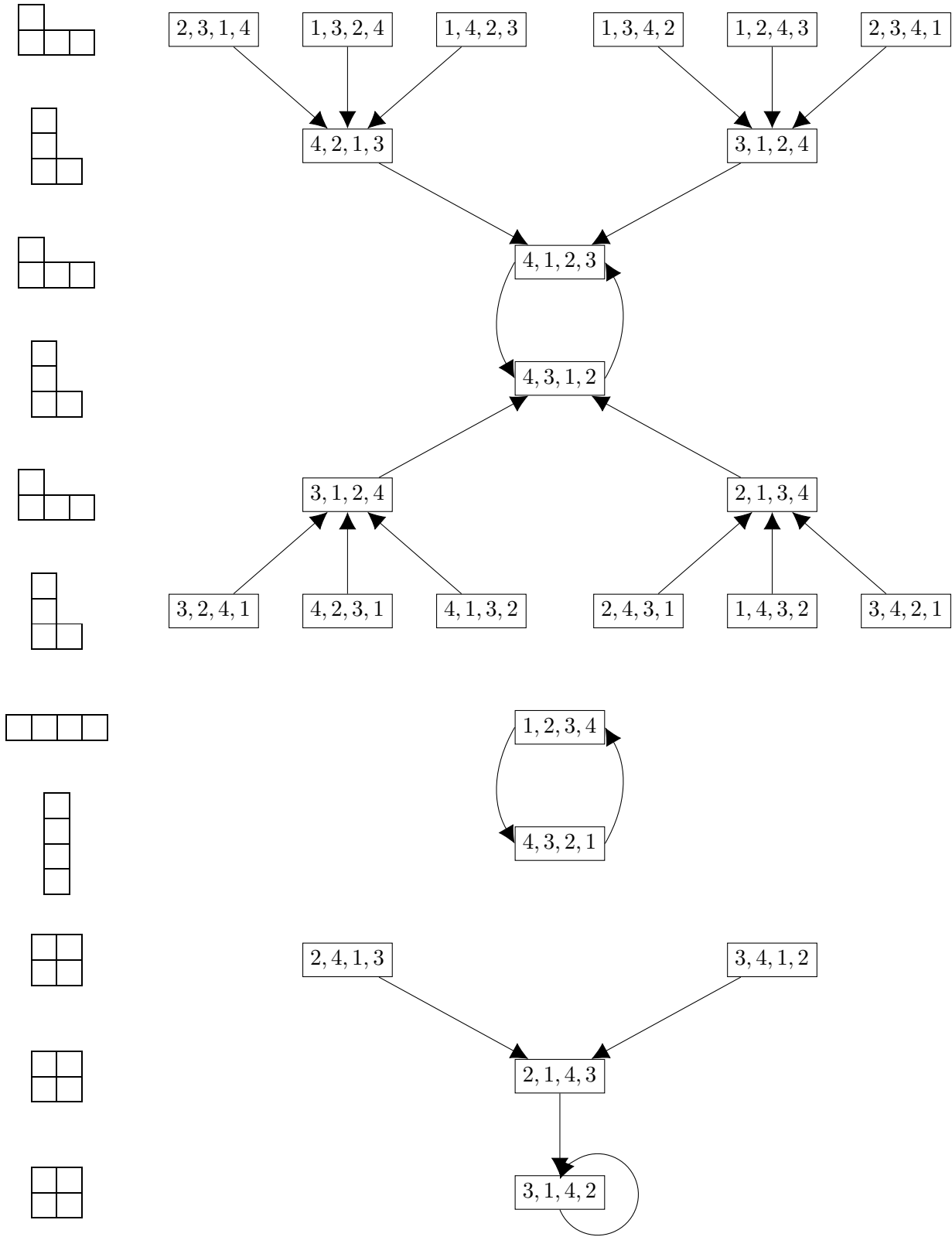
\begin{figure}
    \centering

\tikzstyle{permblock} = [draw, text centered, minimum height=1em]
\tikzstyle{block}  = [rectangle, draw, text width=3.5cm, text centered, minimum height=1em]
\tikzstyle{lblock} = [rectangle, draw, text width=5cm, text centered, minimum height=1em]
\tikzstyle{rblock} = [rectangle, draw, text width=3.5cm, text centered, minimum height=1em]
\tikzstyle{arrow} = [thick,->,>=triangle 45]
\begin{tikzpicture}[node distance=2cm]
\node (shape131) {$\yng(1,3)$};
\node (rw11) [permblock, right of=shape131, xshift=0.7cm] {$2,3,1,4$};
\node (rw12) [permblock, right of=rw11, xshift=0.3cm] {$1,3,2,4$};
\node (rw13) [permblock, right of=rw12, xshift=0.3cm] {$1,4,2,3$};
\node (rw14) [permblock, right of=rw13, xshift=0.7cm] {$1,3,4,2$};
\node (rw15) [permblock, right of=rw14, xshift=0.3cm] {$1,2,4,3$};
\node (rw16) [permblock, right of=rw15, xshift=0.3cm] {$2,3,4,1$};
\node (shape311)[below of=shape131]{$\yng(1,1,2)$};
\node (rw21) [permblock, right of=shape311, xshift=3cm] {$4,2,1,3$};
\node (rw22) [permblock, right of=rw21, xshift=5.3cm] {$3,1,2,4$};
\node (shape132)[below of=shape311]{$\yng(1,3)$};
\node (rw31) [permblock, right of=shape132, xshift=6.65cm] {$4,1,2,3$};
\node (shape312)[below of=shape132]{$\yng(1,1,2)$};
\node (rw41) [permblock, right of=shape312, xshift=6.65cm] {$4,3,1,2$};
\node (shape133)[below of=shape312]{$\yng(1,3)$};
\node (rw51) [permblock, right of=shape133, xshift=3cm] {$3,1,2,4$};
\node (rw52) [permblock, right of=rw51, xshift=5.3cm] {$2,1,3,4$};
\node (shape313) [below of=shape133] {$\yng(1,1,2)$};
\node (rw61) [permblock, right of=shape313, xshift=0.7cm] {$3,2,4,1$};
\node (rw62) [permblock, right of=rw61, xshift=0.3cm] {$4,2,3,1$};
\node (rw63) [permblock, right of=rw62, xshift=0.3cm] {$4,1,3,2$};
\node (rw64) [permblock, right of=rw63, xshift=0.7cm] {$2,4,3,1$};
\node (rw65) [permblock, right of=rw64, xshift=0.3cm] {$1,4,3,2$};
\node (rw66) [permblock, right of=rw65, xshift=0.3cm] {$3,4,2,1$};
\draw[->] (rw11) -- (rw21) ;
\draw[->] (rw12) -- (rw21) ;
\draw[->] (rw13) -- (rw21) ;
\draw[->] (rw14) -- (rw22) ;
\draw[->] (rw15) -- (rw22) ;
\draw[->] (rw16) -- (rw22) ;
\draw[->] (rw21) -- (rw31) ;
\draw[->] (rw22) -- (rw31) ;
\path[every node/.style={font=\sffamily\small}]
    (rw31.west) edge[bend right, ->] node [left] {} (rw41.west);
\path[every node/.style={font=\sffamily\small}]
    (rw41.east) edge[bend right, ->] node [left] {} (rw31.east);

\draw[->] (rw51) -- (rw41) ;
\draw[->] (rw52) -- (rw41) ;
\draw[->] (rw61) -- (rw51) ;
\draw[->] (rw62) -- (rw51) ;
\draw[->] (rw63) -- (rw51) ;
\draw[->] (rw64) -- (rw52) ;
\draw[->] (rw65) -- (rw52) ;
\draw[->] (rw66) -- (rw52) ;

\node (shape041)[below of=shape313]{$\yng(4)$};
\node (rw71) [permblock, right of=shape041, xshift=6.65cm] {$1,2,3,4$};
\node (shape401)[below of=shape041]{$\yng(1,1,1,1)$};
\node (rw81) [permblock, right of=shape401, xshift=6.65cm] {$4,3,2,1$};
\path[every node/.style={font=\sffamily\small}]
    (rw71.west) edge[bend right, ->] node [left] {} (rw81.west);
\path[every node/.style={font=\sffamily\small}]
    (rw81.east) edge[bend right, ->] node [left] {} (rw71.east);

\node (shape221)[below of=shape401]{$\yng(2,2)$};
\node (rw91) [permblock, right of=shape221, xshift=3cm] {$2,4,1,3$};
\node (rw92) [permblock, right of=rw91, xshift=5.3cm] {$3,4,1,2$};
\node (shape222)[below of=shape221]{$\yng(2,2)$};
\node (rw101) [permblock, right of=shape222, xshift=6.65cm] {$2,1,4,3$};
\node (shape223)[below of=shape222]{$\yng(2,2)$};
\node (rw111) [permblock, right of=shape223, xshift=6.65cm] {$3,1,4,2$};
\draw[->] (rw91) -- (rw101) ;
\draw[->] (rw92) -- (rw101) ;
\draw[->] (rw101) -- (rw111) ;
\draw [->] (rw111.south)arc(-158:158:7mm);
\end{tikzpicture}
    \caption{The dynamical system $r$ formed by iterating the RSK algorithm on the reversed reading word of the recording tableau.  The shape of the insertion tableau of each permutation is shown at left.}
    \label{fig:second-diagram}
\end{figure}

Finally, we enumerate the fixed points and $2$-cycles for a given size $n$.  By the above corollary, we have that there is exactly one fixed point for each self-conjugate partition of $n$, and one $2$-cycle for every pair of transposed partitions that are not self-conjugate.  

It is known that the number of self-conjugate shapes of size $n$ is equal to the number of partitions of $n$ into distinct odd parts (see \cite[Proposition 1.8.4]{Stanley1}).  Let $p_{do}(n)$ be the number of partitions of $n$ into distinct odd parts; then it follows that there are $p_{do}(n)$ fixed points and $\frac{1}{2}\left(p(n)-p_{do}(n)\right)$ cycles of length $2$, where $p(n)$ is the total number of partitions of $n$.

\section{Acknowledgments}

We thank Chris Peterson for giving the faculty author the idea for this project.

We would also like to thank the 2020 Online Prove it!\ Math Academy instructors Richard Fried, Bryan Gillespie, Aileen Ma, Ken G.\ Monks, and Ken M.\ Monks  for their helpful feedback and input throughout this project.  

Finally, we thank the referees for their significant feedback.

\end{document}